\let\csname equation*\endcsname\relax
\let\csname endequation*\endcsname\relax
\def\softd{{\leavevmode\setbox1=\hbox{d}%
\hbox to 1.05\wd1{d\kern-0.4ex{\char039}\hss}}}%cstocs
\def\softt{{\leavevmode\setbox1=\hbox{t}%
\hbox to \wd1{t\kern-0.6ex{\char039}\hss}}}%cstocs
\newcommand{\R}{\mathbb{R}}
\newtheorem{theorem}{Theorem}[section]
\newtheorem{lemma}{Lemma}[section]
\newtheorem{proposition}{Proposition}[section]
\newtheorem{remark}{Remark}[section]
\definecolor{Myred}{cmyk}{0.0,1.0,1.0,0.00}
\begin{document}

\title[A magnetic version of the Smilansky-Solomyak  model]
{A magnetic version of the Smilansky-Solomyak  model}

\author{Diana Barseghyan}
\address{Department of Mathematics, University of Ostrava, 30. dubna 22, 70103 Ostrava, Czech Republic}
\address{Nuclear Physics Institute, Academy of Sciences of the Czech Republic,
Hlavn\'{i} 130, 25068 \v{R}e\v{z} near Prague, Czech Republic}
\ead{dianabar@ujf.cas.cz, diana.barseghyan@osu.cz}

\author{Pavel Exner}
\address{Nuclear Physics Institute, Academy of Sciences of the Czech Republic,
Hlavn\'{i} 130, 25068 \v{R}e\v{z} near Prague, Czech Republic}
\address{Doppler Institute, Czech Technical University, B\v{r}ehov\'{a} 7, 11519 Prague, Czech Republic}
\ead{exner@ujf.cas.cz}

\begin{abstract}
We analyze spectral properties of two mutually related families of
magnetic Schr\"{o}dinger operators, $H_{\mathrm{Sm}}(A)=(i \nabla
+A)^2+\omega^2 y^2+\lambda y \delta(x)$ and $H(A)=(i \nabla
+A)^2+\omega^2 y^2+ \lambda y^2 V(x y)$ in $L^2(\R^2)$, with the
parameters $\omega>0$ and $\lambda<0$, where $A$ is a vector
potential corresponding to a homogeneous magnetic field
perpendicular to the plane and $V$ is a regular nonnegative and
compactly supported potential. We show that the spectral properties
of the operators depend crucially on the one-dimensional
Schr\"{o}dinger operators $L= -\frac{\mathrm{d}^2}{\mathrm{d}x^2}
+\omega^2 +\lambda \delta (x)$ and $L (V)= -
\frac{\mathrm{d}^2}{\mathrm{d}x^2} +\omega^2 +\lambda V(x)$,
respectively. Depending on whether the operators $L$ and $L(V)$ are
positive or not, the spectrum of $H_{\mathrm{Sm}}(A)$ and $H(V)$
exhibits a sharp transition.

\vspace{2pc} \noindent{\it Keywords}: Discrete spectrum, essential
spectrum, Smilansky-Solomyak model, spectral transition, homogeneous
magnetic field
\end{abstract}

\submitto{J. Phys. A.: Math. Theor.}

\maketitle

%%%%%%%%%%%%%%%%%%%%%%%%%%%%%%%%%%%%%%%
\section{Introduction} \label{s: intro}
\setcounter{equation}{0}

Irreversible dynamics of quantum systems is usually described
through a coupling of the object, regarded as an open or unstable
system, to another one that plays the role of a `heat bath'. The
latter is usually supposed to be a `large' system having an infinite
numbers of degrees of freedom and the Hamiltonian with a continuous
spectrum, moreover, the presence (or absence) of irreversible modes
is determined by the energies involved rather than the coupling
strength between the object and the bath. While this is all true in
many cases, it need not be true in general. This was demonstrated by
Uzy Smilansky using a simple model \cite{Sm04} which was
subsequently analyzed in detail and generalized by Mikhail Solomyak
and coauthors \cite{So04, S04, ES05a, ES05b, So06a, So06b, NS06,
RS07}, see also \cite{Gu11} and \cite{ELT17}.

In the simplest case the model is described by the Hamiltonian
 % -------------- %
\begin{equation}
\label{HSmil} H_\mathrm{Sm}=-\frac{\partial^2}{\partial x^2}
+\frac12\left( -\frac{\partial^2}{\partial y^2}+y^2 \right) +\lambda
y\delta(x)
\end{equation}
 % -------------- %
in $L^2(\mathbb{R}^2)$ with the natural domain and exhibits a
transition between two types of spectral behavior: for
$|\lambda|\le\sqrt{2}$ the operator \eqref{HSmil} is bounded from
below, while for $|\lambda|>\sqrt{2}$ its spectrum fills the real
line \cite{So04}. The factor $\frac12$ is not important and can be
changed by a scaling of one of the variables. If we replace it by
one, for instance, the critical value of the coupling constant will
be $\lambda=2$. The transition between the two regimes can be
interpreted also dynamically \cite{Gu11}: in the supercritical
regime the $y$-dependent binding energy of $\delta$ interaction wins
over the oscillator potential and the wave packet can escape to
infinity along the singular channel.

While mathematically we deal with the same object, from the physical
point of view one can interpret it in two different ways. In the
original Smilansky paper \cite{Sm04} it was meant as a system of two
one dimensional components, a particle motion on a line to which a
heat bath consisting of a single harmonic oscillator is coupled in a
coordinate dependent way. In the generalizations mentioned above the
line was replaced by other simple configuration spaces, a loop (in
other words, segment with periodic boundary conditions) or a graph,
and the bath could be anharmonic or multidimensional (but still with
a finite number of degrees of freedom).

Another point of view, which can be associated with the work of
Solomyak and coauthors, is to associate the Hamiltonian
\eqref{HSmil} with a two-dimensional system in which the particle
moves in the potential which is the sum of the oscillator `channel'
and the singular component with the position-dependent coupling
strength. Viewed from this angle, the system brings to mind motion
in a potential with channels which are below unbounded and narrowing
towards infinity. In this situation one may also observe a jump
transition from a below bounded to below unbounded spectrum as was
first noted in \cite{Zn98}, a class of models of this type was
analyzed recently in \cite{EB12, BEKT16}. The analogy becomes even
more convincing when we recall that the model \eqref{HSmil} has a
`regular' analogue \cite{BE14, BE17} in which the $\delta$
interaction is replaced by a non-singular potential properly scaled.
In this case, of course, the `first', two subsystem, interpretation
is lost unless we try to interpret the potential as a sort of
nonlocal position-dependent subsystem coupling.

The main question we want to address in the present paper is what
will happen with the model in its two-dimensional version when the
particle it describes is charged and exposed to a homogeneous
magnetic field perpendicular to the plane, in other words, what are
the properties of the operator
 % -------------- %
\begin{equation}\label{Hmagn1}
H_{\mathrm{Sm}}(A)=(i \nabla +A)^2+\omega^2 y^2+\lambda y \delta(x)
 \end{equation}
 % -------------- %
with $\lambda\in\mathbb{R}$ and $\omega>0$, where the vector
potential $A$ corresponds to the indicated magnetic field of
intensity $B>0$. We may consider non-positive $\lambda$ only; as in
the nonmagnetic case this is due to mirror symmetry but the argument
is a bit trickier. The magnetic field changes direction when
observed in a mirror, however, switching the sign of both the
variables we return to the original $A$ and at the same time the
last term on the right-hand side of \eqref{Hmagn1} changes sign. It
is clear that now the two-subsystem interpretation is ultimately
lost, therefore it is appropriate to speak of \eqref{Hmagn1} as of
the Hamiltonian of the \emph{magnetic Smilansky-Solomyak model.}

The dynamics of the model combines the influence of several forces
and its properties are not \emph{a priori} obvious. For $\lambda=0$
the spectrum is absolutely continuous and the particle moves along
the parabolic channel provided its energy is larger than
$\sqrt{\omega^2+B^2}$, and moreover, the transport is stable against
localized perturbations \cite{EK00}. If both $\omega$ and $\lambda$
vanish, operator \eqref{Hmagn1} is the Landau Hamiltonian the
spectrum of which is known to be pure point, consisting of the
Landau levels $(2n+1)B,\: n=0,1,2,\dots$. Was the singular term
position independent, just $\lambda \delta(x)$, it would make the
spectrum absolutely continuous corresponding to transport in the $y$
direction as one could check in a way similar to the Iwatsuka model
\cite[Sec.~6.5]{CFKS87}, \cite[Sec.~7.2.3]{EK15}, or to magnetic
transport along a barrier \cite{FGW00}. The operator \eqref{Hmagn1}
with $\omega=0$ has not been analyzed to the best of our knowledge,
but one can expect that it will exhibit some transport properties
again; at least we will show, as a byproduct of our results here,
that its spectrum covers the whole real axis whenever $\lambda\ne
0$.

The oscillator potential, however, acts against a transport in the
$y$ direction. We are going to show that the resulting behavior is
determined by the balance of the two forces, in a way to a large
degree similar to the nonmagnetic case, $A=0$. To be specific, we
introduce the  comparison operator,
 % -------------- %
\begin{equation}\label{comparison1}
L=-\frac{\mathrm{d}^2}{\mathrm{d}x^2}+\omega^2+\lambda\delta(x)
\end{equation}
 % -------------- %
on $L^2(\R)$ with the usual domain \cite{AGHH05}; our goal is to
establish a correspondence between the spectral regime of
$H_{\mathrm{Sm}}(A)$ and the positivity of the operator
\eqref{comparison1}. We are going to show that the spectrum is
bounded from below provided $\inf\sigma(L)>0$ --- we speak here
about the \emph{subcritical} case --- and it has a purely discrete
character below $\sqrt{\omega^2+B^2}$ being nonempty whenever
$\lambda\ne 0$. In the \emph{critical} case, $\inf\sigma(L)=0$, the
operator $H_{\mathrm{Sm}}(A)$ remains positive but its spectrum is
purely essential and equal to $[0,\infty)$. Finally, if one passes
to the \emph{supercritical} regime, $\inf\sigma(L)<0$, an abrupt
transition occurs and the spectrum fills now the whole real line.

In a similar way, the `regular' version of the model \cite{BE14,
BE17} mentioned above has also its magnetic counterpart. The
dynamics is this case described by the Hamiltonian
 % -------------- %
\begin{equation}\label{Hmagn2}
H(A)=(i \nabla +A)^2+\omega^2 y^2+ \lambda y^2 V(x y)\,,
\end{equation}
 % -------------- %
where $V$ is a nonnegative, sufficiently smooth function with
$\mathrm{supp}(V)\subset[-s_0, s_0]$ for some $s_0>0$, furthermore,
$\lambda\le 0$ and  $\omega>0$, and the magnetic potential $A$
corresponds as before to a homogeneous magnetic field of the
intensity $B>0$. Note that the analogy is not complete because both
parts of the scalar potential are mirror symmetric with respect to
the $x$ axis, however, the effect which we are interested in depends
on the presence of an attractive interaction in the $y$ direction,
irrespective whether is one- or two-sided. Spectral properties of
the operator \eqref{Hmagn2} will be the topic of the second part of
the paper. The abrupt spectral transition occurs here again. The
comparison operator  will be
 % -------------- %
\begin{equation}\label{comparison2}
L(V)=-\frac{\mathrm{d}^2}{\mathrm{d}x^2}+\omega^2+\lambda V(x)
\end{equation}
 % -------------- %
on $L^2(\mathbb{R})$ with the domain $\mathcal{H}^2(\R)$, and its
spectral threshold will be shown to be decisive: $H(A)$ will be
bounded from below provided if $L(V)$ is nonnegative, and its
spectrum will fill the whole real line in the opposite case.

%%%%%%%%%%%%%%%%%%%%%%%%%%%%%%%%%%%%%%%%%%
\section{Spectrum of $H_{\mathrm{Sm}}(A)$}
\label{s:Smil} \setcounter{equation}{0}

Before coming to our proper subject we note that in order to
interpret $H_{\mathrm{Sm}}(A)$ as a quantum mechanical Hamiltonian,
one has check its self-adjointness. In the subcritical case, when
$L$ is strictly positive, we can consider first the domain
$\mathcal{D}_0$ consisting of the family of functions $v$ twice
differentiable except at the $y$ axis, $x=0$, continuous there and
satisfying the matching conditions $\frac{\partial v}{\partial
x}(0+, y)-\frac{\partial v}{\partial x}(0-, y)=\lambda y v(0, y)$
and to identify $H_{\mathrm{Sm}}(A)$ with the \emph{Friedrichs
extension} of such an operator. In other words, we will consider the
quadratic form
 % -------------- %
\begin{equation} \label{Smform}
Q (H_{\mathrm{Sm}} (A))[u]=\int_{\mathbb{R}^2}\bigg[ \left|i
\frac{\partial u}{\partial x}-B y u\right|^2+\left|\frac{\partial
u}{\partial y}\right|^2+\omega^2 y^2 |u|^2
\bigg]\,\mathrm{d}x\,\mathrm{d}y+\lambda \int_{\mathbb{R}}y |u(0,
y)|^2\,\mathrm{d}y
\end{equation}
 % -------------- %
and demonstrate that it is closed on
$\mathcal{D}:=\mathcal{H}^1(\mathbb{R}^2)\cap
\left\{\int_{\mathbb{R}^2}y^2 |u(x,y)|^2\,\mathrm{d}x\,
\mathrm{d}y<\infty\right\}$ and bounded from below provided $\inf
\sigma(L)>0$, and therefore associated with a unique self-adjoint
operator.

The approach based on quadratic forms fails, of course, if we cannot
ensure that the operator is bounded from below. To make things
simple, we can bypass this trouble by noting that
$H_{\mathrm{Sm}}(A)$ is essentially self-adjoint on $\mathcal{D}_0$. Indeed,
it is easy to check that such a operator is densely defined and
symmetric. To ensure that its deficiency indices coincide, it is by
\cite[Thm.~2.8]{GMNT16} sufficient to check that its commutes with a
\emph{conjugation}, i.e. an antilinear map $L^2(\R^2) \to L^2(\R^2)$
which is norm preserving and idempotent; choosing $\mathcal{C}:\:
(\mathcal{C}v)(x,y) = \overline{u(-x,y)}$ we get the claim. Then we
know that $H_{\mathrm{Sm}}(A)\upharpoonright D_0$ has self-adjoint
extension and will show that the indicated spectral properties hold
for \emph{any} such extension.

%%%%%%%%%%%%%%%%%%%%%%%%%%%%%%%%%
\subsection{The subcritical case}
\label{ss:Smil-subcrit-sa}

As we have indicated, to establish the self-adjointness of
$H_{\mathrm{Sm}}(A)$ one has to check that the form \eqref{Smform}
is bounded from below and closed if $\inf\sigma(L)= \omega^2-\frac14
\lambda^2>0$. First we will show that the operator is in fact
positive even if the last inequality is not sharp.
 % -------------- %
\begin{proposition} \label{subcr-posit}
Let $\lambda\ge-2\omega$, then $H_{\mathrm{Sm}}(A)\upharpoonright
\mathcal{D}_0 \ge 0$.
\end{proposition}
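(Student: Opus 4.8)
The plan is to bound the form \eqref{Smform} from below by working fibre-wise in the variable $y$ and exploiting the fact that in one dimension a magnetic field carries no dynamics. Fixing $y$, I would isolate the part of the integrand involving $x$, namely
\[
\int_{\mathbb{R}}\left|i\frac{\partial u}{\partial x}-By\,u\right|^2\mathrm{d}x+\lambda y\,|u(0,y)|^2,
\]
which is the quadratic form of the one-dimensional fibre operator $h(y)=\left(i\frac{\mathrm{d}}{\mathrm{d}x}-By\right)^2+\lambda y\,\delta(x)$ acting in $L^2(\mathbb{R}_x)$.

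The crucial step is to gauge away the magnetic term through the substitution $u(\cdot,y)=e^{-iByx}w(\cdot)$. A short computation gives the identity $i\,u_x-By\,u=e^{-iByx}\,i\,w_x$, together with $|u(0,y)|=|w(0)|$ and $|u(x,y)|=|w(x)|$, so that $h(y)$ is unitarily equivalent to $-\frac{\mathrm{d}^2}{\mathrm{d}x^2}+\lambda y\,\delta(x)$. For the latter the bottom of the spectrum is classical: a point interaction of strength $\alpha=\lambda y$ produces a single bound state at $-\alpha^2/4$ when $\alpha<0$ and none when $\alpha\ge0$, whence $h(y)\ge-\tfrac14\lambda^2 y^2$ for every $y$. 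Translated back to $u$, this yields the pointwise-in-$y$ form bound
\[
\int_{\mathbb{R}}\left|i\frac{\partial u}{\partial x}-By\,u\right|^2\mathrm{d}x+\lambda y\,|u(0,y)|^2\ge-\frac{\lambda^2 y^2}{4}\int_{\mathbb{R}}|u(x,y)|^2\,\mathrm{d}x.
\]

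Integrating this over $y$ and inserting it into \eqref{Smform}, the surviving terms combine into
\[
Q(H_{\mathrm{Sm}}(A))[u]\ge\int_{\mathbb{R}^2}\left|\frac{\partial u}{\partial y}\right|^2\mathrm{d}x\,\mathrm{d}y+\left(\omega^2-\frac{\lambda^2}{4}\right)\int_{\mathbb{R}^2}y^2|u|^2\,\mathrm{d}x\,\mathrm{d}y,
\]
and the right-hand side is manifestly nonnegative precisely when $\omega^2\ge\lambda^2/4$, i.e. when $\lambda\ge-2\omega$, which is the hypothesis of the proposition and coincides with the condition $\inf\sigma(L)\ge0$ for the comparison operator \eqref{comparison1}.

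I expect the only genuine care to be needed in the fibre reduction itself: one has to justify, via Fubini, that for $u\in\mathcal{D}_0$ the section $u(\cdot,y)$ lies in $\mathcal{H}^1(\mathbb{R})$ for almost every $y$ with a well-defined trace $u(0,y)$, so that the one-dimensional bound applies and its $y$-integral reproduces \eqref{Smform}. The piecewise-$C^2$ structure together with the matching condition built into $\mathcal{D}_0$ makes this routine, and the $y$-dependent gauge factor $e^{-iByx}$ enters merely as a pointwise-in-$y$ unitary, so it introduces no measurability or domain difficulties.
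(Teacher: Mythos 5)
Your proof is correct and follows essentially the same route as the paper: both estimate the form fibre-wise in $x$ at fixed $y$, gauge away the magnetic term $By$, and reduce positivity to the spectral bound for the one-dimensional $\delta$ interaction. The only cosmetic difference is that the paper rescales $x$ to identify the fibre operator with $y^2 L$ for $y>0$ (and $y^2\widetilde{L}$ for $y<0$) and then cites positivity of $L$, whereas you invoke the explicit bound $-\frac{\mathrm{d}^2}{\mathrm{d}x^2}+\alpha\,\delta(x)\ge-\frac{\alpha^2}{4}$ with $\alpha=\lambda y$, which treats both signs of $y$ at once; note only that your equivalence of $\lambda\ge-2\omega$ with $\omega^2\ge\lambda^2/4$ rests on the paper's standing convention $\lambda\le 0$, exactly as the paper's own proof does.
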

 % -------------- %
\begin{proof}
For every $u\in \mathcal{D}_0$ the form \eqref{Smform} can be
estimated form below by neglecting the `transverse' contribution to
the kinetic energy
 % -------------- %
$$
Q( H_{\mathrm{Sm}} (A))[u] \ge \int_{\mathbb{R}^2} \bigg[\left|i
\frac{\partial u}{\partial x}-B y u\right|^2+\omega^2 y^2
|u|^2\bigg]\,\mathrm{d}x\,\mathrm{d}y\\+\lambda \int_{\mathbb{R}}y
|u(0, y)|^2\,\mathrm{d}y\,.
$$
 % -------------- %
For any fixed $y$ the form $u(\cdot,y)\mapsto
\int_{\mathbb{R}}\left|i \frac{\partial u}{\partial x}-B y
u\right|^2+\omega^2 y^2 |u|^2\,\mathrm{d}x+\lambda y |u(0, y)|^2$
corresponds to the essentially self-adjoint operator
 % -------------- %
$$
\left(i \frac{\mathrm{d}}{\mathrm{d}x}-B y\right)^2+\omega^2
y^2+\lambda y \delta(x)
$$
 % -------------- %
whose closure has $\mathcal{H}^1(\mathbb{R})$ as its form domain.
This operator is unitarily equivalent to $y^2 L$ which is positive
by assumption if $y>0$, and to $y^2\widetilde{L}\ge0$ if $y<0$,
where
 % -------------- %
\begin{equation}\label{widetildeL}
\widetilde{L}:=-\frac{d^2}{d x^2}+\omega^2-\lambda \delta(x)\,;
\end{equation}
 % -------------- %
this establishes the sought claim.
\end{proof}

\begin{proposition}
The form \eqref{Smform} is closed if $\lambda>-2\omega$.
\end{proposition}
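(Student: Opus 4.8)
The plan is to deduce closedness from the equivalence, on $\mathcal{D}$, of the form norm $\|u\|_Q:=\big(Q(H_{\mathrm{Sm}}(A))[u]+\|u\|^2\big)^{1/2}$ with the intrinsic norm $\|u\|_{\mathcal{D}}^2:=\|u\|_{\mathcal{H}^1(\R^2)}^2+\int_{\R^2}y^2|u|^2\,\mathrm{d}x\,\mathrm{d}y$. Since $\mathcal{D}$ is the intersection of $\mathcal{H}^1(\R^2)$ with a weighted $L^2$-space it is complete under $\|\cdot\|_{\mathcal{D}}$, and by Proposition~\ref{subcr-posit} the form is nonnegative, so $\|\cdot\|_Q$ is a genuine norm; once the two norms are shown to be equivalent, completeness transfers to $\|\cdot\|_Q$, which is precisely the closedness of \eqref{Smform}. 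Throughout I use that $\lambda<0$ (the case $\lambda=0$ being the free magnetic oscillator, for which the claim is immediate).

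For the bound $\|u\|_Q^2\le C_1\|u\|_{\mathcal{D}}^2$ every term except the singular one is controlled at once after expanding $|i\partial_x u-Byu|^2\le 2|\partial_x u|^2+2B^2y^2|u|^2$. The boundary term I would estimate by the elementary one-dimensional trace inequality $|f(0)|^2\le \varepsilon\|f'\|_{L^2(\R)}^2+\varepsilon^{-1}\|f\|_{L^2(\R)}^2$ with the $y$-adapted choice $\varepsilon=|y|^{-1}$, which after multiplication by $|y|$ and integration in $y$ yields $\int_{\R}|y|\,|u(0,y)|^2\,\mathrm{d}y\le \|\partial_x u\|^2+\int_{\R^2}y^2|u|^2\le\|u\|_{\mathcal{D}}^2$; in particular the trace term is finite on $\mathcal{D}$.

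The substantial step is the coercivity estimate $\|u\|_{\mathcal{D}}^2\le C_2\|u\|_Q^2$, and here I would reuse the fibered positivity from the proof of Proposition~\ref{subcr-posit}. Put $\theta:=1+\lambda/(2\omega)$, which lies in $(0,1)$ exactly because $-2\omega<\lambda<0$, and for each fixed $y$ write the fiber contribution as
\[
\int_{\R}|i\partial_x u-Byu|^2\,\mathrm{d}x+\omega^2y^2\|u(\cdot,y)\|^2+\lambda y|u(0,y)|^2=(1-\theta)\,F(y)+\theta\,G(y),
\]
where $G(y)$ denotes the Dirichlet--oscillator part (the first two terms) and $F(y)$ is the fiber form with coupling constant replaced by $\mu:=\lambda/(1-\theta)=-2\omega$. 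The positivity argument of Proposition~\ref{subcr-posit}, which via unitary equivalence to $y^2L$ for $y>0$ and to $y^2\widetilde L$ with $\widetilde L$ from \eqref{widetildeL} for $y<0$ gives nonnegativity of the fiber form for every coupling $\ge-2\omega$, applies to $\mu=-2\omega$ and yields $F(y)\ge0$. Hence the fiber contribution is bounded below by $\theta\,G(y)$.

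Integrating this in $y$ and adding $\int|\partial_y u|^2$ gives $Q(H_{\mathrm{Sm}}(A))[u]\ge \theta\int_{\R^2}|i\partial_x u-Byu|^2+\theta\omega^2\int_{\R^2}y^2|u|^2+\int_{\R^2}|\partial_y u|^2$, so $Q[u]$ dominates a positive multiple of the magnetic gradient, the $y$-weight and the $y$-derivative; recovering the ordinary $x$-derivative through $\int|\partial_x u|^2\le 2\int|i\partial_x u-Byu|^2+2B^2\int y^2|u|^2$ then bounds $\|u\|_{\mathcal{D}}^2$ by a multiple of $Q[u]+\|u\|^2$. I expect the coercivity to be the main obstacle: the negative boundary term has to be absorbed with relative bound strictly below one, which is possible precisely when $\theta>0$, i.e.\ when $\lambda>-2\omega$ is strict; the borrowing parameter $\theta=1+\lambda/(2\omega)$ is exactly the device that turns the qualitative fiber positivity of Proposition~\ref{subcr-posit} into the quantitative lower bound, and it degenerates at the critical value $\lambda=-2\omega$, in agreement with the change of spectral regime noted there.
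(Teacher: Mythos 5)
Your proof is correct and takes essentially the same route as the paper: your convex splitting with $\theta=1+\lambda/(2\omega)$ is precisely the paper's decomposition with $\alpha=1-\theta$ (the paper takes $\alpha\in\big(\frac{|\lambda|}{2\omega},1\big)$, you push to the endpoint where the effective coupling $\lambda/(1-\theta)$ equals the critical value $-2\omega$, still admissible by Proposition~\ref{subcr-posit}), and your one-dimensional trace inequality is the same estimate the paper uses to control the boundary term by $Q_0(A)$. The only difference is packaging: you phrase closedness as completeness of $\mathcal{D}$ under the form norm via a two-sided norm equivalence, while the paper runs the equivalent Cauchy-sequence argument relying on the closedness of $Q_0(A)$.
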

 % -------------- %
\begin{proof}
Let $\{u_n\}_{n=1}^\infty\subset\mathcal{D}$ be a sequence
converging to some $u\in L^2(\R^2)$ and satisfying
 % -------------- %
\begin{equation} \label{Cauchy}
Q(H_{\mathrm{Sm}})(A)[u_n-u_m]\to 0
\end{equation}
 % -------------- %
as $m,n\to\infty$. By the assumption one can choose $\alpha\in
\big(\frac{|\lambda|}{2\omega},1\big)$ and rewrite the form value in
question as
 % -------------- %
\begin{eqnarray}
\nonumber \hspace{-4em} Q( H_{\mathrm{Sm}}(A))[u_n-u_m] =
(1-\alpha)\biggl(\int_{\mathbb{R}^2}\left|(i
\nabla+A)(u_n-u_m)\right|^2\,\mathrm{d}x\,\mathrm{d}y\\\label{closedness}
\hspace{-2.5em} +\omega^2\int_{\mathbb{R}^2} y^2
|u_n-u_m|^2\,\mathrm{d}x\,\mathrm{d}y\biggr)+\alpha\biggl(\int_{\mathbb{R}^2}\left|(i
\nabla+A)(u_n-u_m)\right|^2\,\mathrm{d}x\,\mathrm{d}y\\\nonumber
\hspace{-2.5em} +\omega^2 \int_{\mathbb{R}^2}y^2
|u_n-u_m|^2\,\mathrm{d}x\,\mathrm{d}y+\frac{\lambda}{\alpha}
\int_{\mathbb{R}}y |u_n(0, y)-u_m(0, y)|^2\,\mathrm{d}x\biggr)\,.
\end{eqnarray}
 % -------------- %
In the same way as in the proof of the previous proposition one can
check that the second summand on the right-hand side of
(\ref{closedness}) is nonnegative, hence neglecting it we estimate
$Q( H_{\mathrm{Sm}}(A))[u_n-u_m]$ from below by the first summand
which in view of \eqref{Cauchy} tends to zero as $m,n\to\infty$.
Since $\alpha\ne 1$ by construction, this means that the sequence
$\{Q_0(A)[u_n]\}$ is Cauchy, where $Q_0(A)$ is the `unperturbed'
form
 % -------------- %
$$
u\mapsto Q_0(A)[u]:=\int_{\mathbb{R}^2} \big[\left|(i
\nabla+A)u\right|^2+\omega^2 y^2
|u|^2\big]\,\mathrm{d}x\,\mathrm{d}y
$$
 % -------------- %
defined on $\mathcal{D}$. It is not difficult to verify that
$Q_0(A)$ is closed and this in turn implies that the limit function
$u$ belongs to $\mathcal{D}$ and
 % -------------- %
$$
\int_{\mathbb{R}^2}\big[\left|(i \nabla+A)(u_n-u)\right|^2+\omega^2
y^2 |u_n-u|^2\big]\,\mathrm{d}x\,\mathrm{d}y\to0\quad\text{as}\quad
n\to\infty\,,
$$
 % -------------- %
cf.~\cite[Problem~VIII.15]{RS80}. It remains to check that
$\int_{\mathbb{R}}y |u_n(0, y)-u(0, y)|^2\,\mathrm{d}y \to 0$ holds
as well. Using a couple of simple estimates,
 % -------------- %
\begin{eqnarray*}
\int_{\mathbb{R}}|y||v(0, y)|^2\,\mathrm{d}y \le \frac{2}{\omega}
\int_{\mathbb{R}^2}\left(\left|\frac{\partial v}{\partial
x}\right|^2+\omega^2y^2 |v(x, y)|^2\right)\,\mathrm{d}x\,\mathrm{d}y
\\ \quad \le \frac{1}{2\omega}
\int_{\mathbb{R}^2}\left(\left|\frac{\partial v}{\partial
x}\right|^2+\left|\frac{\partial v}{\partial y}+i B x v\right|^2(x,
y)+ \omega^2y^2 |v(x, y)|^2\right)\,\mathrm{d}x\,\mathrm{d}y \\
\quad = \frac{1}{2\omega}\, Q_0(a)[v]\,,
\end{eqnarray*}
 % -------------- %
and inserting $v=u_n-u$, we conclude the proof.
\end{proof}

This guarantees that in the subcritical case there is a unique
self-adjoint operator $H_{\mathrm{Sm}}(A)$ associated with the form
\eqref{Smform}.

%%%%%%%%%%%%%%%%%%%%%%%%%%%%%%%%%%%%%%%%%%%%%%%%%%%%%
\subsection{The essential spectrum}
\label{ss:Smil-subcr-ess}

In fact will first show that the essential spectrum is nonempty
independently of $\lambda$.

 % -------------- %
\begin{theorem} \label{th:subcritess}
$\sigma_{\mathrm{ess}} (H_{\mathrm{Sm}}(A))\supset
[\sqrt{\omega^2+B^2}, \infty)$.
\end{theorem}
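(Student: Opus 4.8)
The plan is to produce, for every $E\ge\sqrt{\omega^2+B^2}$, a singular Weyl sequence $\{u_j\}\subset\mathcal{D}_0$, that is, functions with $\|u_j\|=1$, $u_j\rightharpoonup 0$ weakly, and $\|(H_{\mathrm{Sm}}(A)-E)u_j\|\to 0$; by the Weyl criterion this places $E$ in the essential spectrum. The feature I would exploit is that every element of the sequence can be chosen to vanish in a neighbourhood of the line $x=0$, so that on the one hand it trivially satisfies the matching condition defining $\mathcal{D}_0$ and thus lies in the domain of \emph{any} self-adjoint extension, while on the other hand the singular term $\lambda y\delta(x)$ acts as zero on it. It therefore suffices to build approximate eigenfunctions of the regular channel operator $H_0(A)=(i\nabla+A)^2+\omega^2 y^2$.

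First I would identify the spectrum of $H_0(A)$. Since it is invariant under translations in $x$, a partial Fourier transform in that variable decomposes it into the fibre operators $h(\xi)=-\frac{\D^2}{\D y^2}+(\omega^2+B^2)\big(y+\tfrac{B\xi}{\omega^2+B^2}\big)^2+\frac{\omega^2\xi^2}{\omega^2+B^2}$ on $L^2(\R)$, each a shifted harmonic oscillator with eigenvalues $(2n+1)\sqrt{\omega^2+B^2}+\frac{\omega^2\xi^2}{\omega^2+B^2}$, $n=0,1,2,\dots$. As $\xi$ runs through $\R$ the lowest branch alone sweeps out $[\sqrt{\omega^2+B^2},\infty)$, so for the given $E$ I can fix $\xi_0$ with $\sqrt{\omega^2+B^2}+\frac{\omega^2\xi_0^2}{\omega^2+B^2}=E$ and take the generalized eigenfunction $\Phi(x,y)=e^{i\xi_0 x}\psi(y)$, where $\psi$ is the (Gaussian) ground state of $h(\xi_0)$; it satisfies $H_0(A)\Phi=E\Phi$ but is not square integrable because of the plane wave in $x$.

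Next I would localize $\Phi$ with a cutoff playing a double role. Put $u_j=\|\chi_j\Phi\|^{-1}\chi_j\Phi$ with $\chi_j(x)=\chi\big((x-R_j)/L_j\big)$, where $\chi\in C_c^\infty$ is a fixed bump whose support avoids the origin and the width $L_j$ and centre $R_j$ satisfy $L_j\to\infty$ and $R_j/L_j\to\infty$ (for instance $R_j=j^2$, $L_j=j$). The second condition keeps $\operatorname{supp}u_j$ away from $x=0$, so that $H_{\mathrm{Sm}}(A)u_j=H_0(A)u_j$ and $u_j\in\mathcal{D}_0$, and it also forces the supports to escape to infinity, whence $u_j\rightharpoonup 0$. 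The first condition controls the error: since $\chi_j$ commutes with the $y$-part, $(H_0(A)-E)(\chi_j\Phi)=[\,(i\partial_x-By)^2,\chi_j\,]\Phi=-\chi_j''\Phi-2i(\xi_0+By)\chi_j'\Phi$, and the Gaussian decay of $\psi$ makes $\|(\xi_0+By)\psi\|$ finite, so $\|(H_0(A)-E)(\chi_j\Phi)\|^2=O(L_j^{-1})$ while $\|\chi_j\Phi\|^2$ grows like $L_j$; after normalization the error is $O(L_j^{-2})\to0$.

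The main obstacle is precisely the tension between the two demands on the cutoff: annihilating the $\delta$ term exactly wants $\chi_j$ supported far from the $y$-axis, but a narrow travelling packet would leave the commutator error of order one, so the packet must simultaneously spread (making $\chi_j',\chi_j''$ small) and recede from $x=0$. Choosing $L_j\to\infty$ with $R_j/L_j\to\infty$ reconciles the two, the only delicate point being control of the magnetic cross term $By\,\chi_j'\Phi$, which is handled by the exponential decay of the oscillator ground state $\psi$. Assembling these facts and invoking Weyl's criterion yields $E\in\sigma_{\mathrm{ess}}(H_{\mathrm{Sm}}(A))$ for every $E\ge\sqrt{\omega^2+B^2}$; and since the construction uses only functions on which every self-adjoint extension acts as the differential expression, the inclusion holds for all of them.
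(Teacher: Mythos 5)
Your proposal is correct, and it proves the inclusion by a leaner route than the paper, though the two arguments share the same skeleton: both exploit the fact that Weyl functions supported away from the line $x=0$ lie in $\mathcal{D}_0$ (so the $\delta$ term acts as zero and the construction is valid for \emph{every} self-adjoint extension), and both rest on the fibre decomposition of $H_0(A)=(i\nabla+A)^2+\omega^2y^2$ in the Landau gauge, with lowest branch $\sqrt{\omega^2+B^2}+\frac{\omega^2\xi^2}{\omega^2+B^2}$. The difference is in the execution. The paper builds its trial functions as wave packets $\int_E g\big(y-\frac{\xi B}{\omega^2+B^2}\big)e^{i\xi(x-\alpha k)}\,\D\xi$ over a whole momentum band $E$ whose width is tied to $\varepsilon$, multiplies by cutoffs in \emph{both} variables, $\eta(x/k)\chi(y/k)$, and controls everything through repeated Plancherel computations; the error then has two sources, an $\varepsilon^2$-sized contribution from the band width and $\mathcal{O}(k^{-2})$ from the cutoff derivatives, and a diagonal choice $\varepsilon_j\searrow0$ with disjoint supports finishes the proof. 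You instead take the exact generalized eigenfunction at a \emph{single} fibre momentum $\xi_0$ with $\frac{\omega^2\xi_0^2}{\omega^2+B^2}=E-\sqrt{\omega^2+B^2}$, cut off only in $x$, and let the cutoff simultaneously spread ($L_j\to\infty$) and recede from the axis ($R_j/L_j\to\infty$); the entire error is then the commutator $-\chi_j''\Phi-2i(\xi_0+By)\chi_j'\Phi$, of size $\mathcal{O}(L_j^{-2})$ after normalization, the transverse localization being supplied by the Gaussian decay of the oscillator ground state rather than by a $y$-cutoff --- your explicit remark that this decay tames the magnetic cross term $By\,\chi_j'\Phi$ is exactly the point that makes the single-momentum version work. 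Your route is the more elementary and transparent one (a standard travelling-packet commutator estimate, no Plancherel bookkeeping, only one cutoff parameter), and it also reaches the threshold $E=\sqrt{\omega^2+B^2}$ directly by taking $\xi_0=0$, whereas the paper constructs sequences only for $\mu>\sqrt{\omega^2+B^2}$ and gets the endpoint from the closedness of the essential spectrum. What the paper's band construction buys in exchange is mostly uniformity of the bookkeeping in its two-parameter family, which it reuses almost verbatim for $H(A)$ in Theorem \ref{th:subcritess2}; your argument would adapt there just as well, since the extra term $\lambda y^2V(xy)$ is small on functions supported at large $|x|$ and moderate $|y|$.
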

 % -------------- %
\begin{proof}
It is sufficient to construct a Weyl sequence for any number
$\mu>\sqrt{\omega^2+B^2}$. To this aim, we fix first a positive
number $\varepsilon$ and construct a function $\phi$  such that
 % -------------- %
\begin{equation}\label{Weyl ineq.}
\|H_{\mathrm{Sm}} (A) \phi-\mu \phi\|_{L^2
(\mathbb{R}^2)}<\varepsilon \|\phi\|\,.
\end{equation}
 % -------------- %
We employ the functions
 % -------------- %
\begin{eqnarray}\label{Functions}\lefteqn{
\varphi_{k, \alpha, m}(x, y):=}\\ \nonumber &&\frac{1}{\sqrt{2\pi\,
\mathrm{vol}(E)}}\left(\int_E g\left(y-\frac{\xi
B}{\omega^2+B^2}\right)\, e^{i \xi (x-\alpha
k)}\,\mathrm{d}\xi\right)\,\eta\left(\frac{x}{k}\right)\,\chi\left(\frac{y}{k}\right)\,,
\end{eqnarray}
 % -------------- %
where $g$ is the normalized eigenfunction associated with the
principal eigenvalue of the harmonic oscillator modified by the
presence of the magnetic field, $h_{\mathrm{osc}}=
-\frac{\mathrm{d}^2}{\mathrm{d}y^2}+\left(\omega^2+B^2\right) y^2$
on $L^2(\mathbb{R})$, the functions $\eta\in C_0^\infty (1, m)$\,,
$\chi\in C_0^\infty(-1, 1)$ are supposed to satisfy the following
requirements,
 % -------------- %
$$
\eta(z)\ge\frac{1}{2} \quad\text{if}\quad z\in \left(\frac{3}{2},
\frac{m}{2}\right)\,,\quad\chi(z)\ge\frac{1}{2} \quad\text{if}\quad
z\in \left(-\frac{1}{2}, \frac{1}{2}\right)\,,
$$
 % -------------- %
and the set $E$ is defined by
 % -------------- %
$$
E=(\delta_1(\varepsilon), \delta_2(\varepsilon)):= \left\{\xi:
\frac{\sqrt{(\tilde{\mu}-\varepsilon)
(\omega^2+B^2)}}{\omega}<\xi<\frac{\sqrt{(\tilde{\mu}+\varepsilon)
(\omega^2+B^2)}}{\omega}\right\}\,,
$$
 % -------------- %
where $\tilde{\mu}:=\mu-\sqrt{\omega^2+B^2}$ and
$k,\,m,\,\alpha\,\in\mathbb{N}$ are positive integers to be chosen
later. Note that $\mathrm{supp}\,(\psi_{k,\alpha,m})\subset
[k,mk]\times[-k,k]$, and therefore
 % -------------- %
$$
\frac{\partial \psi_{k,\alpha,m}}{\partial x}(0+, y)=\frac{\partial
\psi_{k,\alpha,m}}{\partial x}(0-, y)=\lambda y
\psi_{k,\alpha,m}(0,y)=0\,, $$
 % -------------- %
which means that the functions $\psi_{k,\alpha,m}$ belong to the
domain of $H_\mathrm{Sm}(A)$ as needed.

First we observe that $\|\varphi_{k, \alpha,
m}\|_{L^2(\mathbb{R}^2)} \ge\frac{1}{8}$ because
 % -------------- %
\begin{eqnarray}
\hspace{-4em}\nonumber \int_{\mathbb{R}^2} |\varphi_{k, \alpha,
m}(x,y)|^2\,
\mathrm{d}x\,\mathrm{d}y \\
\hspace{-3em}\nonumber=\frac{1}{2\pi\,\mathrm{vol}(E)}\int_{\mathbb{R}^2}\left|\int_E
g\left(y-\frac{\xi B}{\omega^2+B^2}\right) e^{i \xi (x-\alpha k)}\,
\mathrm{d}\xi\right|^2\,\eta^2\left(\frac{x}{k}\right)\,
\chi^2\left(\frac{y}{k}\right)\,\mathrm{d}x\,\mathrm{d}y\\
\hspace{-3em}\nonumber=\frac{1}{2\pi\, \mathrm{vol}(E)}\int_k^{m
k}\,\int_{-k}^k\left|\int_E g\left(y-\frac{\xi
B}{\omega^2+B^2}\right) e^{i \xi (x-\alpha
k)}\,\mathrm{d}\xi\right|^2\,\eta^2\left(\frac{x}{k}\right)\,
\chi^2\left(\frac{y}{k}\right)\,\mathrm{d}x\,\mathrm{d}y\\
\hspace{-3em}\nonumber\ge\frac{1}{32\pi\,
\mathrm{vol}(E)}\int_{-k/2}^{k/2}\,\int_{3k/2}^{m k/2}\left|\int_E
g\left(y-\frac{\xi B}{\omega^2+B^2}\right) e^{i \xi (x-\alpha
k)}\,\mathrm{d}\xi\right|^2\,\mathrm{d}y\,\mathrm{d}x\\
\hspace{-3em}\label{int}=\frac{1}{32\pi\, \mathrm{vol}
(E)}\int_{-k/2}^{k/2}\,\int_{(3-2\alpha) k/2}^{(m-2\alpha)
k/2}\left|\int_E g\left(y-\frac{\xi B}{\omega^2+B^2}\right) e^{i \xi
x}\,\mathrm{d}\xi\right|^2\,\mathrm{d}y\,\mathrm{d}x\,.
\end{eqnarray}
 % -------------- %
By choosing  $\alpha=\alpha(k)$ and $m=m(\alpha, k)$ large enough
one is able to guarantee that for every  $y\in\left(-\frac{k}{2},
\frac{k}{2}\right)$ we have
 % -------------- %
\begin{eqnarray*}
\hspace{-1em}\frac{1}{2\pi}\int_{(3-2\alpha)k/2}^{(m-2\alpha)
k/2}\left|\int_E g\left(y-\frac{\xi B}{\omega^2+B^2}\right) e^{i \xi
x}\,\mathrm{d}\xi\right|^2\,\mathrm{d}x\\\ge\frac{1}{4\pi}\int_{\mathbb{R}}\left|\int_E
g\left(y-\frac{\xi B}{\omega^2+B^2}\right) e^{i \xi
x}\,\mathrm{d}\xi\right|^2\,\mathrm{d}x\\=\frac{1}{2}\int_Eg^2\left(y-\frac{x
B}{\omega^2+B^2}\right)\,\mathrm{d}x\,,
\end{eqnarray*}
 % -------------- %
where in the last step we have employed Plancherel formula. This
estimate together with (\ref{int}) gives for large $k$ the
inequalities
 % -------------- %
\begin{eqnarray*}
\hspace{-1em}\int_{\mathbb{R}^2} |\varphi_{k, \alpha, m}(x,y)|^2\,
\mathrm{d}x\,\mathrm{d}y\ge\frac{1}{32
\mathrm{vol}(E)}\int_{-k/2}^{k/2}\int_E \left|g\left(y-\frac{x
B}{\omega^2+B^2}\right)\right|^2\,\mathrm{d}x\,\mathrm{d}y\\\ge\frac{1}{32
\mathrm{vol}(E)}\int_E\,\int_{-k/2-x B/(\omega^2+B^2)}^{k/2-x
B/(\omega^2+B^2)}g^2(z)\,\mathrm{d}z\,\mathrm{d}x\\
\ge\frac{1}{64}\,\int_{\mathbb{R}}|g(z)|^2\,\mathrm{d}z=\frac{1}{64}.
\end{eqnarray*}
 % -------------- %

Our next aim is to show the validity of (\ref{Weyl ineq.}) with an
appropriate choice of $k,\,\alpha(k)$ and $m(\alpha, k)$. By a
straightforward calculation one gets
 % -------------- %
\begin{eqnarray}\nonumber
\hspace{-5em}\sqrt{2\pi}\, \frac{\partial^2\varphi_{k, \alpha,
m}}{\partial y^2} =\frac{1}{\sqrt{\mathrm{vol}(E)}} \left(\int_E
g''\left(y-\frac{\xi B}{\omega^2+B^2}\right) \,e^{i \xi (x-\alpha
k)}\,\mathrm{d}\xi\right)\,\eta\left(\frac{x}{k}\right)\,\chi\left(\frac{y}{k}\right)\\
\hspace{-4em}\label{derivative1}+\frac{2}{k \sqrt{\mathrm{vol}(E)}}
\left(\int_E g^\prime\left(y-\frac{\xi B}{\omega^2+B^2}\right)
\,e^{i \xi (x-\alpha
k)}\,\mathrm{d}\xi\right)\,\eta\left(\frac{x}{k}\right)\,\chi^\prime\left(\frac{y}{k}\right)\\
\hspace{-4em}\nonumber+\frac{1}{k^2 \sqrt{\mathrm{vol}(E)} }
\left(\int_E g\left(y-\frac{\xi B}{\omega^2+B^2}\right) \,e^{i \xi
(x-\alpha
k)}\,\mathrm{d}\xi\right)\,\eta\left(\frac{x}{k}\right)\,\chi''\left(\frac{y}{k}\right)\,,
\end{eqnarray}
 % -------------- %
and
 % -------------- %
\begin{eqnarray}\nonumber
\hspace{-5em}\sqrt{2\pi}\, \frac{\partial^2\varphi_{k, \alpha,
m}}{\partial x^2} =-\frac{1}{\sqrt{\mathrm{vol}(E)}}\left(\int_E
\xi^2 g\left(y-\frac{\xi B}{\omega^2+B^2}\right)\,e^{i \xi (x-\alpha
k)}\,\mathrm{d}\xi\right)\,\eta\left(\frac{x}{k}\right)\,\chi\left(\frac{y}{k}\right)\\
\hspace{-4em}\nonumber+\frac{2i}{k \sqrt{\mathrm{vol}(E)}}
\left(\int_E \xi g\left(y-\frac{\xi B}{\omega^2+B^2}\right)\,e^{i
\xi (x-\alpha
k)}\,\mathrm{d}\xi\right)\,\eta^\prime\left(\frac{x}{k}\right)\,\chi\left(\frac{y}{k}\right)\\
\hspace{-4em}\label{derivative2}+\frac{1}{k^2 \sqrt{\mathrm{vol}(E)}
} \left(\int_E g\left(y-\frac{\xi B}{\omega^2+B^2}\right)\,e^{i \xi
(x-\alpha
k)}\,\mathrm{d}\xi\right)\,\eta''\left(\frac{x}{k}\right)\,\chi\left(\frac{y}{k}\right)\,,\\
\hspace{-5em} \nonumber \sqrt{2\pi}\, y \frac{\partial\varphi_{k,
\alpha, m}}{\partial x} =\frac{i
y}{\sqrt{\mathrm{vol}(E)}}\left(\int_E \xi g\left(y-\frac{\xi
B}{\omega^2+B^2}\right) \,e^{i \xi (x-\alpha
k)}\,\mathrm{d}\xi\right)\,\eta\left(\frac{x}{k}\right)\,\chi\left(\frac{y}{k}\right)\\
\hspace{-4em}\label{calculations*}+\frac{y}{k
\sqrt{\mathrm{vol}(E)}}\left(\int_E  g\left(y-\frac{\xi
B}{\omega^2+B^2}\right) \,e^{i \xi (x-\alpha
k}\,\mathrm{d}\xi\right)\,\eta^\prime\left(\frac{x}{k}\right)\,\chi\left(\frac{y}{k}\right)
\,.\end{eqnarray}
 % -------------- %
We want to show that choosing $k$ sufficiently large one can make
the last two terms on the right-hand side of the first equation
(\ref{derivative1}) as small as one wishes in the $L^2$ norm, and
the same for the last two terms of the second equation
(\ref{derivative2}) and the last term of (\ref{calculations*}). This
follows from the following estimates,
 % -------------- %
\begin{eqnarray*}
\hspace{-4em}\frac{1}{k^2\, \mathrm{vol} (E)}
\int_{\mathbb{R}^2}\left|\int_E g^\prime\left(y-\frac{\xi
B}{\omega^2+B^2}\right) \,e^{i \xi (x-\alpha
k)}\,\mathrm{d}\xi\right|^2\,\eta^2\left(\frac{x}{k}\right)\,
\left(\chi^\prime\right)^2\left(\frac{y}{k}\right)\,\mathrm{d}x\,\mathrm{d}y
\\[.2em]
\hspace{-3em}\le\frac{\|\eta\|_\infty^2
\|\chi^\prime\|_\infty^2}{k^2\, \mathrm{vol} (E)}
\int_{\mathbb{R}^2}\left|\int_E g^\prime\left(y-\frac{\xi
B}{\omega^2+B^2}\right) \,e^{i \xi
x}\,\mathrm{d}\xi\right|^2\,\mathrm{d}x\,\mathrm{d}y\\[.2em]
\hspace{-3em}=\frac{\|\eta\|_\infty^2
\|\chi^\prime\|_\infty^2}{k^2\, \mathrm{vol}
(E)}\int_{\mathbb{R}}\int_E\left(g^\prime\left(y-\frac{x
B}{\omega^2+B^2}\right)\right)^2\,\mathrm{d}x\,\mathrm{d}y\\[.2em]
\hspace{-3em}=
\frac{\|\eta\|_\infty^2\,\|\chi^\prime\|^2_\infty}{k^2}
\int_{\mathbb{R}}\left(g^\prime\right)^2(z)\,\mathrm{d}z\,,
\end{eqnarray*}
 % -------------- %
where in the last step we employed again Plancherel formula, and
similarly,
 % -------------- %
\begin{eqnarray}
\hspace{-5em}\nonumber \frac{1}{k^4\, \mathrm{vol} (E)}
\int_{\mathbb{R}^2}\left|\int_E g\left(y-\frac{\xi
B}{\omega^2+B^2}\right) \,e^{i \xi (x-\alpha
k)}\,\mathrm{d}\xi\right|^2\,\eta^2\left(\frac{x}{k}\right)\,
\left(\chi''\right)^2\left(\frac{y}{k}\right)\,\mathrm{d}x\,\mathrm{d}y\\[.3em]
\hspace{-4em}\nonumber\le\frac{\|\eta\|_\infty^2\,\|\chi''\|^2_\infty
}{k^4}\int_{\mathbb{R}} g^2(z)\,\mathrm{d}z \\[.5em]
\hspace{-5em}\nonumber\frac{1}{k^2\, \mathrm{vol} (E)}
\int_{\mathbb{R}^2}\left|\int_E \xi g\left(y-\frac{\xi
B}{\omega^2+B^2}\right) \,e^{i \xi (x-\alpha
k)}\,\mathrm{d}\xi\right|^2\,\left(\eta^\prime\right)^2\left(\frac{x}{k}\right)\,
\chi^2\left(\frac{y}{k}\right)\,\mathrm{d}x\,\mathrm{d}y\\[.5em]
\hspace{-4em}\nonumber\le\frac{\|\eta^\prime\|_\infty^2\,\|\chi\|^2_\infty
\delta^2_2(\varepsilon)}{k^2}\int_{\mathbb{R}}
g^2(z)\,\mathrm{d}z\,,\\[.3em]
\hspace{-5em}\nonumber \frac{1}{k^4\, \mathrm{vol} (E)}
\int_{\mathbb{R}^2}\left|\int_E g\left(y-\frac{\xi
B}{\omega^2+B^2}\right) \,e^{i \xi (x-\alpha
k)}\,\mathrm{d}\xi\right|^2\,\left(\eta''\right)^2\left(\frac{x}{k}\right)\,
\chi^2\left(\frac{y}{k}\right)\,\mathrm{d}x\,\mathrm{d}y\\[.3em]
\hspace{-4em}\nonumber\le\frac{\|\eta''\|_\infty^2\,\|\chi\|^2_\infty
}{k^4}\int_{\mathbb{R}} g^2(z)\,\mathrm{d}z\,,\\[.5em]
\hspace{-5em}\nonumber \frac{1}{k^2\, \mathrm{vol} (E)}
\int_{\mathbb{R}^2}y^2\left|\int_E g\left(y-\frac{\xi
B}{\omega^2+B^2}\right) \,e^{i \xi (x-\alpha
k)}\,\mathrm{d}\xi\right|^2\,\left(\eta^\prime\right)^2\left(\frac{x}{k}\right)\,
\chi^2\left(\frac{y}{k}\right)\,\mathrm{d}x\,\mathrm{d}y\\[.3em]
\hspace{-4em}\label{integrals}
\le\frac{\|\eta^\prime\|_\infty^2\,\|\chi\|^2_\infty
}{k^2}\int_{\mathbb{R}}\left(|z|+\frac{\delta_2 (\varepsilon)\,
B}{\omega^2+B^2} \right)^2 g^2(z)\,\mathrm{d}z\,,
\end{eqnarray}
 % -------------- %
and consequently, all these integrals are at least
$\mathcal{O}(k^{-2})$ as $k\to\infty$. Then we can estimate the norm
on left-hand side of \eqref{Weyl ineq.} as
 % -------------- %
\begin{eqnarray*}
\hspace{-6em} \int_{\mathbb{R}^2}\left|H_{\mathrm{Sm}}(A)
\varphi_{k, \alpha, m}-\mu \varphi_{k, \alpha,
m}\right|^2(x,y)\,\mathrm{d}x\,\mathrm{d}y \\[.2em]
\hspace{-5em}
=\int_{\mathbb{R}^2}\biggl|-\frac{\partial^2\varphi_{k, \alpha,
m}}{\partial x^2}- \frac{\partial^2\varphi_{k, \alpha, m}}{\partial
y^2}+2 i By\frac{\partial\varphi_{k, \alpha, m}}{\partial x}
+(\omega^2+B^2)y^2\varphi_{k, \alpha, m}-\mu\varphi_{k, \alpha,
m}\biggr|^2\, \mathrm{d}x\,\mathrm{d}y \\[.2em] \hspace{-5em}
=\frac{1}{2\pi\, \mathrm{vol} (E)} \int_{\mathbb{R}^2}\biggl|\int_E
\biggl(-g''\left(y-\frac{\xi B}{\omega^2+B^2}\right) +\xi^2
g\left(y-\frac{\xi B}{\omega^2+B^2}\right)\,\\[.2em] \hspace{-4em}-2B y \xi
g\left(y-\frac{\xi B}{\omega^2+B^2}\right)+(\omega^2+B^2)y^2
g\left(y-\frac{\xi B}{\omega^2+B^2}\right)\\[.2em] \hspace{-4em} -\mu g\left(y-\frac{\xi
B}{\omega^2+B^2}\right)\biggr)\,e^{i \xi (x-\alpha
k)}\,\mathrm{d}\xi\biggr|^2\,\eta^2\left(\frac{x}{k}\right)\,
\chi^2\left(\frac{y}{k}\right)\,\mathrm{d}x\,\mathrm{d}y+\mathcal{O}(k^{-2})
\end{eqnarray*}
 % -------------- %
\begin{eqnarray*}
\hspace{-5em} =\frac{1}{2\pi \mathrm{vol} (E)}
\int_{\mathbb{R}^2}\biggl|\int_E \biggl(-g''\left(y-\frac{\xi
B}{\omega^2+B^2}\right)+\biggl((B^2+\omega^2) \left(y-\frac{\xi
B}{\omega^2+B^2}\right)^2\\[.2em]
\hspace{-4em} +\frac{\omega^2
\xi^2}{\omega^2+B^2}-\mu\biggr)g\left(y-\frac{\xi
B}{\omega^2+B^2}\right) \biggr)\,e^{i \xi (x-\alpha
k)}\,\mathrm{d}\xi\biggr|^2\,\eta^2\left(\frac{x}{k}\right)\,
\chi^2\left(\frac{y}{k}\right)\,\mathrm{d}x\,\mathrm{d}y+\mathcal{O}(k^{-2})\\[.2em]
\hspace{-5em} =\frac{1}{2\pi \mathrm{vol} (E)}
\int_{\mathbb{R}^2}\biggl|\int_E \left(\frac{\omega^2
\xi^2}{\omega^2+B^2}-\tilde{\mu}\right) g\left(y-\frac{\xi
B}{\omega^2+B^2}\right) \biggr)\,e^{i \xi (x-\alpha
k)}\,\mathrm{d}\xi\biggr|^2\,\\[.2em]
\hspace{-4em} \times\eta^2\left(\frac{x}{k}\right)\,
\chi^2\left(\frac{y}{k}\right)\,\mathrm{d}x\,\mathrm{d}y+\mathcal{O}(k^{-2})\\[.2em]
\hspace{-5em}\le \frac{\|\eta\|_\infty^2 \|\chi\|_\infty^2}{2\pi
\mathrm{vol} (E)}\int_{\mathbb{R}^2}\biggl|\int_E
\left(\frac{\omega^2 \xi^2}{\omega^2+B^2}-\tilde{\mu}\right)
g\left(y-\frac{\xi B}{\omega^2+B^2}\right) \biggr)\,e^{i \xi
x}\,\mathrm{d}\xi\biggr|^2\,\mathrm{d}x\,\mathrm{d}y
+\mathcal{O}(k^{-2})  \\[.2em]
\hspace{-5em} \le\frac{\|\eta\|_\infty^2
\|\chi\|_\infty^2}{\mathrm{vol} (E)} \int_{E\times\mathbb{R}}
\left(\frac{\omega^2 x^2}{\omega^2+B^2}-\tilde{\mu}\right)^2
g^2\left(y-\frac{x B}{\omega^2+B^2}\right)
\,\mathrm{d}x\,\mathrm{d}y +\mathcal{O}(k^{-2})\\[.2em]
\hspace{-5em} \le\frac{\varepsilon^2 \|\eta\|_\infty^2
\|\chi\|_\infty^2}{\mathrm{vol} (E)} \int_{E\times\mathbb{R}}
g^2\left(y-\frac{xB}{\omega^2+B^2}\right)\,\mathrm{d}x\,\mathrm{d}y
+\mathcal{O}(k^{-2})\\[.2em]
\hspace{-5em}\le\varepsilon^2 \|\eta\|_\infty^2 \|\chi\|_\infty^2
\int_{\mathbb{R}} g^2(z)\,\mathrm{d}z+\mathcal{O}(k^{-2})\,.
\end{eqnarray*}
 % -------------- %
Hence choosing a large enough $k$ we can achieve that
\begin{align}
\nonumber \hspace{-.6em}\int_{\mathbb{R}^2}|H_{\mathrm{Sm}}
(A)\varphi_{k, \alpha, m}-\mu\varphi_{k, \alpha, m}|^2(x,y)\:
\mathrm{d}x\,\mathrm{d}y & <64\varepsilon^2 \|\eta\|_\infty^2
\|\chi\|_\infty^2 \int_{\mathbb{R}^2}|\varphi_{k, \alpha,
m}|^2\,\mathrm{d}x\,\mathrm{d}y+\mathcal{O}(k^{-2})\\[.2em]
\label{final1} & \le65\varepsilon^2 \|\eta\|_\infty^2
\|\chi\|_\infty^2 \int_{\mathbb{R}^2}|\varphi_{k, \alpha,
m}|^2\,\mathrm{d}x\,\mathrm{d}y\,.
\end{align}
 % -------------- %
To complete the proof we choose a sequence
$\{\varepsilon_j\}_{j=1}^\infty$ such that $\varepsilon_j\searrow0$
holds as $j\to\infty$ and to any given $j$ we construct a function
$\left\{\varphi_{\varepsilon_j}\right\}_{j=1}^\infty=\left\{\varphi_{k(\varepsilon_j),
\alpha(k(\varepsilon_j)), m(\alpha(k(\varepsilon_j)),
k(\varepsilon_j))}\right\}_{j=1}^\infty$ with the parameters chosen
in such a way that $k(\varepsilon_j)
>m(\alpha(k(\varepsilon_{j-1})),
k(\varepsilon_{j-1}))\,k(\varepsilon_{j-1})$. The norms of
$H_{\mathrm{Sm}} (A)\varphi_{\varepsilon_j}$ satisfy the inequality
(\ref{final1}) with $65\varepsilon_j^2 \|\eta\|_\infty^2
\|\chi\|_\infty^2 \|\varphi_{\varepsilon_j}\|_{L^2(\mathbb{R}^2)}^2$
on the right-hand side, and at the same time the sequence converges
by construction weakly to zero.
\end{proof}

%%%%%%%%%%%%%%%%%%%%%%%%%%%%%%%%%%%%%%%%%%%%%%%%
\subsection{Subcritical case: the essential spectrum threshold}
\label{ss:Smil-subcr-thresh}

Next we are going to show that in the subcritical case the inclusion
in Theorem~\ref{th:subcritess} is in fact an equality.
 % -------------- %
\begin{theorem} Let $\lambda>-2
\omega$, then the spectrum of $H_{\mathrm{Sm}} (A)$ below
$\sqrt{\omega^2+B^2}$ is purely discrete.
\end{theorem}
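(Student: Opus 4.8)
The plan is to complement Theorem~\ref{th:subcritess} by proving the reverse inequality $\inf\sigma_{\mathrm{ess}}(H_{\mathrm{Sm}}(A))\ge\sqrt{\omega^2+B^2}$. Together the two give $\inf\sigma_{\mathrm{ess}}(H_{\mathrm{Sm}}(A))=\sqrt{\omega^2+B^2}$, whence everything below this value can consist only of isolated eigenvalues of finite multiplicity, i.e.\ the spectrum there is purely discrete. To estimate the bottom of the essential spectrum I would invoke Persson's characterization, writing $\inf\sigma_{\mathrm{ess}}$ as $\lim_{r\to\infty}$ of the infimum of $Q(H_{\mathrm{Sm}}(A))[u]/\|u\|^2$ over $u$ in the form domain $\mathcal{D}$ with $\mathrm{supp}\,u\subset\{x^2+y^2>r^2\}$, thereby reducing the problem to a lower bound on the form for functions supported far from the origin.

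The next step is an IMS localization adapted to the two ways of escaping to infinity. I would fix a smooth real partition of unity $\phi_1^2+\phi_2^2=1$, with $\phi_1$ supported where $|x|$ is large and $\phi_2$ where $|x|$ stays bounded (so that, on the support of an admissible $u$, $|y|$ is large), the transition taking place over a slowly growing scale so that $\|\,|\nabla\phi_j|\,\|_\infty\to0$. The magnetic localization identity $Q[u]=\sum_j Q[\phi_j u]-\sum_j\|\,|\nabla\phi_j|\,u\|^2$ holds verbatim because $[(i\nabla+A),\phi_j]=i\nabla\phi_j$ exactly as in the nonmagnetic case, and the singular term localizes without error since the $\phi_j$ are real and $\sum_j\phi_j^2=1$; hence the localization remainder tends to zero. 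On $\mathrm{supp}\,\phi_1$ the distance to the line $x=0$ is positive, the $\delta$ contribution drops out, and $Q[\phi_1 u]=Q_0(A)[\phi_1 u]\ge\sqrt{\omega^2+B^2}\,\|\phi_1 u\|^2$, the constant being the bottom of the free magnetic oscillator $(i\nabla+A)^2+\omega^2 y^2$ furnished by the explicit diagonalization already used in the proof of Theorem~\ref{th:subcritess}. On $\mathrm{supp}\,\phi_2$ I would discard the nonnegative term $|\partial_y u|^2$ and apply, fibre by fibre in $y$, the unitary equivalence established in Proposition~\ref{subcr-posit}, according to which the transverse operator equals $y^2 L$ for $y>0$ and $y^2\widetilde L$ for $y<0$. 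Since $\lambda>-2\omega$ yields $\inf\sigma(L)=\omega^2-\tfrac14\lambda^2>0$ and $\inf\sigma(\widetilde L)=\omega^2>0$, one gets $Q[\phi_2 u]\ge(\omega^2-\tfrac14\lambda^2)\,y_{\min}^2\,\|\phi_2 u\|^2$, where $y_{\min}\to\infty$ as $r\to\infty$. Letting $r\to\infty$, the $\phi_2$ bound diverges, the $\phi_1$ bound stays at $\sqrt{\omega^2+B^2}$, and the remainder vanishes, giving the claimed inequality.

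The step I expect to be the main obstacle is precisely the large-$|x|$ estimate, and the subtle point is \emph{why} one must use localization rather than the more familiar Neumann bracketing. Cutting at $x=\pm a$ with Neumann conditions would be fatal here: the Neumann realization of a magnetic operator on a half-plane supports edge states whose energy lies strictly below the lowest Landau threshold, so such bracketing would only produce a bound of the type $\Theta_0 B<\sqrt{\omega^2+B^2}$, too weak to locate the essential threshold. The localization argument circumvents this because $\phi_1 u$ is a genuine element of the form domain of the \emph{full}-plane operator rather than of a boundary-value problem, so no spurious edge spectrum intervenes and the sharp constant $\sqrt{\omega^2+B^2}$ survives.

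The remaining points are routine and I would dispatch them briefly: justifying the fibrewise form inequality together with the interchange of the $y$-integration exactly as in Proposition~\ref{subcr-posit}, checking that $\phi_j u\in\mathcal{D}$ whenever $u\in\mathcal{D}$ (which holds since the $\phi_j$ are bounded with bounded derivatives), and verifying that the partition can be chosen with vanishing gradient contribution. Assembling these ingredients yields $\inf\sigma_{\mathrm{ess}}(H_{\mathrm{Sm}}(A))=\sqrt{\omega^2+B^2}$ in the subcritical regime, and hence the purely discrete character of the spectrum below this threshold.
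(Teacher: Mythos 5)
Your argument is correct in outline, but it takes a genuinely different route from the paper. The paper proves the theorem by Neumann bracketing with \emph{horizontal} cuts at $y=\pm n$: the half-planes $\{\pm y\ge n\}$ are disposed of by exactly your fibrewise bound ($y^2L$ for $y>0$, $y^2\widetilde L$ for $y<0$), while the middle strip $\mathbb{R}\times[-n,n]$ is handled in two steps --- a lemma showing that the resolvent difference between the strip operator with and without the $\delta$ term is compact (via the Sobolev trace theorem and the compact embedding $\mathcal{H}^{1/2}\hookrightarrow L^2$ on the segment $x=0$, $|y|\le n$), so the two essential-spectrum thresholds coincide, and then a partial Fourier transform in $x$ writing the $\delta$-free strip operator as a direct integral of Neumann harmonic oscillators on intervals, whose thresholds are $\ge\sqrt{\omega^2+B^2}+\mathcal{O}(n^{-1})$ uniformly in the fibre parameter. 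Your IMS/Persson scheme avoids boundary conditions altogether: the large-$|x|$ piece is compared with the full-plane operator, for which $Q_0(A)\ge\sqrt{\omega^2+B^2}$ follows from the same direct-integral diagonalization, so the sharp constant comes for free and no trace/compactness lemma is needed to remove the $\delta$ term. Your warning about edge states is correct in spirit for \emph{vertical} Neumann cuts at $x=\pm a$, where de Gennes-type boundary states would depress the threshold by an amount that does not vanish as $a\to\infty$; note, however, that the paper's cuts are horizontal, where the Neumann lowering of the fibre oscillators is only $\mathcal{O}(n^{-1})$ and disappears in the limit --- so bracketing is viable, just not in the direction you ruled out.

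One step you should not treat as off-the-shelf: Persson's theorem is classically stated for $-\Delta+V$ with locally regular $V$, and it does not literally cover a magnetic form with a $\delta$ interaction whose coupling $\lambda y$ is unbounded and sign-indefinite. You must either prove the Persson-type characterization for the form \eqref{Smform} (which is done by essentially the same IMS decomposition you set up, so this is circular-looking but legitimate if written out), or bypass Persson entirely: your localization already yields an estimate of the form $Q(H_{\mathrm{Sm}}(A))[u]\ge\left(\sqrt{\omega^2+B^2}-\epsilon\right)\|u\|^2-C_\epsilon\int_K|u|^2\,\mathrm{d}x\,\mathrm{d}y$ for a compact set $K$, and the discreteness of the spectrum below any $\Lambda<\sqrt{\omega^2+B^2}$ then follows from the compact embedding $\mathcal{H}^1(K)\hookrightarrow L^2(K)$ by the standard argument that any infinite orthonormal family in the corresponding spectral subspace would contradict compactness. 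With that repair (and the routine verification that $\phi_j u\in\mathcal{D}$ and that the magnetic IMS identity holds for real $\phi_j$ with $\phi_1^2+\phi_2^2=1$, both of which you correctly flag), your proof is complete and self-contained.
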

 % -------------- %
\begin{proof}
We employ a Neumann bracketing in a way similar to \cite{BE17}. Let
$h^{(\pm)}_n (A)$ and $h_0 (A)$ be the Neumann restrictions of
operator $H_{\mathrm{Sm}}(A)$ to the regions
 % -------------- %
$$
G^{(\pm)}_n=\mathbb{R} \times \{y:\: \pm y \ge n\}$$
 % -------------- %
and $G_0=\mathbb{R}\times [-n, n]$, where $n\in\mathbb{N}$ will be
chosen later. In view of the minimax principle \cite[Secs.~XIII.1
and XIII.15]{RS78} we have the inequality
 % -------------- %
\begin{equation}\label{N}
H_{\mathrm{Sm}}(A)\ge \left(h^{(+)}_n (A)\oplus h^{(-)}_n
(A)\right)\oplus h^{(0)} (A)\,.
\end{equation}
 % -------------- %
To prove the claim we are going first to demonstrate that for a
sufficiently large $n$ the spectra of $h_n^{(\pm)}(A)$ below
$\sqrt{\omega^2+B^2}$ are empty, and secondly, that for any
$\Lambda<\sqrt{\omega^2+B^2}$ the spectrum of $h_0(A)$ below
$\Lambda$ is purely discrete.

The quadratic form $Q(h_n^{(\pm)} (A))$ corresponding to
$h_n^{(\pm)} (A)$ coincides with
 % -------------- %
\begin{align*}
& Q(h_n^{(\pm)} (A))=\int_{\mathbb{R}}\int_{\pm y\ge n}
\left|\frac{\partial u}{\partial x}-i B y
u\right|^2\,\mathrm{d}x\,\mathrm{d}y+\int_{\mathbb{R}}\int_{\pm y\ge
n} \left|\frac{\partial u}{\partial
y}\right|^2\,\mathrm{d}x\,\mathrm{d}y \\[.3em]
& \qquad +\omega^2\int_{\mathbb{R}}\int_{\pm y\ge n}y^2
|u|^2\,\mathrm{d}x\,\mathrm{d}y +\lambda\int_{\mathbb{R}}\int_{\pm
y\ge n}y |u(0, y)|^2\,\mathrm{d}y \\[.3em] & \quad
\ge\int_{\mathbb{R}}\int_{\pm y\ge
n} \left|\frac{\partial u}{\partial x}-i B y
u\right|^2\,\mathrm{d}x\,\mathrm{d}y
+\omega^2\int_{\mathbb{R}}\int_{\pm y\ge n}y^2
|u|^2\,\mathrm{d}x\,\mathrm{d}y+\lambda \int_{\mathbb{R}}\int_{\pm
y\ge n}y |u(0, y)|^2\,\mathrm{d}y
\end{align*}
 % -------------- %
defined on $\mathcal{H}^1(\mathbb{R}\times \{y: \pm y\ge n\})$. As
before the quadratic forms
 % -------------- %
$$
u(\cdot,y)\mapsto \int_{\mathbb{R}}\left|i \frac{\partial
u}{\partial x}-B y u\right|^2+\omega^2 y^2
|u|^2\,\mathrm{d}x+\lambda y |u(0, y)|^2
$$
 % -------------- %
are for any fixed $y$ unitarily equivalent to $y^2 L$ if $y>0$ and
$y^2\widetilde{L}$ if $y<0$, where $\widetilde{L}$ is given in
(\ref{widetildeL}). Thus we have
 % -------------- %
\begin{eqnarray*}
\inf \sigma (h_n^{\pm})\ge n^2 \inf \sigma (L)=n^2
\left(\omega^2-\frac{\lambda^2}{4}\right)\quad\text{if}\quad y>0\,,
\\[.3em]
\inf \sigma (h_n^{\pm})\ge n^2 \inf \sigma (\widetilde{L})=n^2\,;
\omega^2 \quad\text{if}\quad y<0
\end{eqnarray*}
 % -------------- %
recall that we suppose $\lambda<0$. This concludes the proof of our
first claim provided one chooses a sufficiently large $n$. It
remains to inspect the essential spectrum of $h_0 (A)$. To this aim
we employ the following auxiliary result.
 % -------------- %
\begin{lemma} \label{pertlemma}
Under our assumptions
 % -------------- %
\begin{equation}\label{stability}
\inf \sigma_{\mathrm{ess}} (h_0 (A)) =\inf
\sigma_{\mathrm{ess}}(\tilde{h}_0 (A))\,,
\end{equation}
 % -------------- %
where $\tilde{h}_0 (A)$ is the Neumann operator $\left(i
\frac{\partial}{\partial x}- B y\right)^2-\frac{\partial^2}{\partial
y^2}+\omega^2 y^2$ defined  on $\mathcal{H}^1(G_0)$.
\end{lemma}
 % -------------- %
\begin{proof}
We are going to show that for any real number $\mu$ from the
resolvent sets of both $h_0 (A)$  and $\tilde{h}_0 (A)$  the
operator
 % -------------- %
\begin{equation}\label{W}
W:=\left(h_0 (A)-\mu \mathbb{I}\right)^{-1}-\left(\tilde{h}_0
(A)-\mu \mathbb{I}\right)^{-1}
\end{equation}
 % -------------- %
is compact in $L^2(G_0)$. We proceed as in \cite{BEL13}; for any
fixed $f, g\in L^2(G_0)$ we put $u:=\left(h_0 (A)-\mu
\mathbb{I}\right)^{-1} f$ and $v:=\big(\tilde{h}_0 (A)-\mu
\mathbb{I}\big)^{-1} g$. Then
 % -------------- %
\begin{eqnarray*}
(Wf, g)=\left(\left(h_0 (A)-\mu \mathbb{I}\right)^{-1} f,
g\right)-\big(\big(\tilde{h}_0 (A)-\mu \mathbb{I}\big)^{-1} f,
g\big)\\ \quad =(u, g)-(f,
v)=\big(u,\big({h}_0(A)-\mu\mathbb{I}\big) v\big) -\left(\left(h_0
(A)-\mu \mathbb{I}\right) u,v\right)\\\quad =\big(\tilde{h}_0 (A) u,
v\big)-\left(h_0 (A) u,v\right) =-\alpha \int_{-n}^n y u(0, y)
\overline{v(0,y)}\,\mathrm{d}y\,.
\end{eqnarray*}
 % -------------- %
Let $\{f_n\}_{n=1}^\infty$ and $\{g_n\}_{n=1}^\infty$ be  bounded
sequences in $L^2(G_0)$. Then it follows from the Sobolev trace
theorem \cite{M00} that the $\mathcal{H}^1(G_0)$ bounded functions
$u_n:=\left(h_0 (A)-\mu \mathbb{I}\right)^{-1} f_n$ and
$v_n=\left(\tilde{h}_0 (A)-\mu \mathbb{I}\right)^{-1} g_n$ are also
bounded in $\mathcal{H}^{1/2}(x=0,\,|y|\le n).$ In view of the
compact embedding $\mathcal{H}^{1/2}(x=0,\,|y|\le n) \hookrightarrow
L^2(x=0,\,|y|\le n)$ the sequences $\{u_n\}$ and
$\{v_n\}_{n=1}^\infty$ are compact in $L^2(x=0,\,|y|\le n)$. In
combination with the inequality
 % -------------- %
\begin{eqnarray*}
\left(W(f_n-f_m), W(f_n-f_m)\right)\\
\quad \le|\alpha| \,n\,\sqrt{\int_{-n}^n  |(u_n-u_m)(0,
y)|^2\,\mathrm{d}y}\,\,\sqrt{\int_{- n}^n
|(\tilde{v}_n-\tilde{v}_m)(0, y)|^2\,\mathrm{d}y}
\end{eqnarray*}
 % -------------- %
with $\tilde{v}_n=\big(\tilde{h}_0 (A)-\mu \mathbb{I}\big)^{-1}
(W(f_n-f_m))$, this establishes our claim.
\end{proof}

Let us now return to the proof of the theorem. In view of the lemma
it is sufficient to inspect the threshold of
$\sigma_\mathrm{ess}\big(\tilde{h}_0 (A)\big)$. Using the partial Fourier
transformation $\mathrm{F}_\xi (u(x, y))$ given by
 % -------------- %
\begin{equation}\label{Fourier}
\mathrm{F}_\xi (u(x,
y))=\widehat{u}(x, y)=\frac{1}{\sqrt{2 \pi}} \int_{\mathbb{R}}
u(\xi, y) e^{-i \xi x}\,\mathrm{d}\xi
\end{equation}
 % -------------- %
and the Landau gauge for the vector potential, $A=(-B y, 0)$, one is
able to rewrite the quadratic form $Q_0$ of $\tilde{h}_0 (A)$ as
 % -------------- %
\begin{eqnarray*}
\hspace{-6em}Q_0 (u)=\int_{\mathbb{R}\times [-n, n]} \left(\left|i
\nabla u+A u)\right|^2(x, y)+\omega^2 y^2|u|^2(x,
y)\right)\,\mathrm{d}x\,\mathrm{d}y\\ \hspace{-5.5em}
=\int_{\mathbb{R}\times [-n, n]} \left[\left|\frac{\partial
\widehat{u}}{\partial y}\right|^2(\xi, y)+\biggl((\omega^2+B^2)
\left(y-\frac{\xi B}{\omega^2+B^2}\right)^2 +\frac{\omega^2
\xi^2}{\omega^2+B^2}\biggr)|\widehat{u}|^2(\xi,y)
\right]\,\mathrm{d}\xi\,\mathrm{d}y\,.
\end{eqnarray*}
 % -------------- %
Thus $\tilde{h}_0 (A)$ is unitarily equivalent to the direct
integral
 % -------------- %
$$
\int_{\mathbb{R}}^{\oplus} h(\xi)\,\mathrm{d}\xi\,,
$$
 % -------------- %
where the fibers $h(\xi) =-\frac{\mathrm{d}^2}{\mathrm{d}y^2}
+(\omega^2+B^2) \left(y-\frac{\xi B}{\omega^2+B^2}
\right)^2+\frac{\omega^2 \xi^2}{\omega^2+B^2}$ are one-dimensional
Neumann operators defined on $L^2(-n, n)$. By a simple change of
variables we arrive at the Neumann harmonic oscillators $l
(\xi)=-\frac{\mathrm{d}^2}{\mathrm{d}y^2}+(\omega^2+B^2)
y^2+\frac{\omega^2 \xi^2}{\omega^2+B^2}$  on the interval
$\left[-n-\frac{\xi B}{\omega^2+B^2}, n-\frac{\xi
B}{\omega^2+B^2}\right]$. Similarly as in \cite{BE17} one can check
that
 % -------------- %
$$
\inf \sigma (l (\xi))\ge \sqrt{\omega^2+B^2}+\mathcal{O}(n^{-1})
$$
 % -------------- %
holds for large $n$ uniformly in $\xi\in\mathbb{R}$. The spectrum of $\sigma (\tilde{h}_0 (A))$ is determined by those of the fiber operators \cite[Sec.~XIII.16]{RS78}, in particular we get
 % -------------- %
\begin{equation}\label{sigmaess}
\sigma (\tilde{h}_0 (A))=\sigma_{\mathrm{ess}}  (\tilde{h}_0 (A))\subset \left[\sqrt{\omega^2+B^2}+\mathcal{O}(n^{-1}), \infty\right)\,.
\end{equation}
 % -------------- %
By virtue of Lemma~\ref{pertlemma} this yields
 % -------------- %
$$
\inf \sigma_{\mathrm{ess}} (h_0 (A))\ge \sqrt{\omega^2+B^2}+\mathcal{O}(n^{-1})\,,
$$
 % -------------- %
and therefore for any $\Lambda<\sqrt{\omega^2+B^2}$ one can choose $n$ large enough to ensure that the spectrum of $h_0(A)$ below $\Lambda$ is purely discrete which concludes the proof.
\end{proof}

%%%%%%%%%%%%%%%%%%%%%%%%%%%%%%%%%%%%%%%%%%%%%%%%%%%%%%%%%%%%%%%%%
\subsection{Subcritical case: existence of the discrete spectrum}
\label{ss:Smil-subcr-exist}

The above results localize exactly the essential spectrum, however, they tell us nothing about the existence of the discrete spectrum. This is the question we are going to address now.
 % -------------- %
\begin{theorem}
Let $\lambda>-2\omega$, then the discrete spectrum of $H_{\mathrm{Sm}} (A)$ is nonempty and contained in the interval $(0, \sqrt{\omega^2+B^2})$.
\end{theorem}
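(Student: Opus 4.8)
The plan is to read off the location of the discrete spectrum from the two preceding results and then reduce everything to a variational construction. By Theorem~\ref{th:subcritess} the essential spectrum contains $[\sqrt{\omega^2+B^2},\infty)$, while the theorem of Section~\ref{ss:Smil-subcr-thresh} shows that below $\sqrt{\omega^2+B^2}$ the spectrum is purely discrete; hence $\inf\sigma_{\mathrm{ess}}(H_{\mathrm{Sm}}(A))=\sqrt{\omega^2+B^2}$ and every point of the spectrum beneath this value is an isolated eigenvalue of finite multiplicity. Since $H_{\mathrm{Sm}}(A)\ge0$ by Proposition~\ref{subcr-posit}, the discrete spectrum is automatically contained in $[0,\sqrt{\omega^2+B^2})$, so it remains to prove that $0$ is not an eigenvalue and that at least one eigenvalue exists.

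For the first point I would sharpen the positivity argument of Proposition~\ref{subcr-posit}. Dropping the nonnegative term $\int|\partial_y u|^2$ and using that the transverse form at fixed $y$ is unitarily equivalent to $y^2L$ for $y>0$ and to $y^2\widetilde L$ for $y<0$, one obtains
\[
Q(H_{\mathrm{Sm}}(A))[u]\ \ge\ c\int_{\mathbb{R}^2}y^2\,|u(x,y)|^2\,\mathrm{d}x\,\mathrm{d}y,\qquad c:=\min\{\,\omega^2-\tfrac14\lambda^2,\ \omega^2\,\}>0,
\]
where the strict positivity of $c$ is exactly the place the hypothesis $\lambda>-2\omega$ enters, through $\inf\sigma(L)=\omega^2-\tfrac14\lambda^2>0$. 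If $0$ were an eigenvalue with eigenfunction $\psi$, then $Q[\psi]=0$ would force $\int y^2|\psi|^2=0$, i.e. $\psi(\cdot,y)=0$ for a.e.\ $y$ and hence $\psi=0$; this contradiction places the discrete spectrum in $(0,\sqrt{\omega^2+B^2})$.

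The substance of the theorem is existence, which I would obtain from the minimax principle by exhibiting $\psi\in\mathcal{D}$ with $Q(H_{\mathrm{Sm}}(A))[\psi]<\sqrt{\omega^2+B^2}\,\|\psi\|^2$. The key observation is that for a real product $u(x)g(y)$ in the Landau gauge the magnetic term $B^2y^2|\psi|^2$ merges with $\omega^2y^2|\psi|^2$ into the potential of $h_{\mathrm{osc}}$, so that $Q(H_{\mathrm{Sm}}(A))[u\,g]=\|u'\|^2+\sqrt{\omega^2+B^2}\,\|u\|^2$ with no surviving interaction term, because $g$ is even while the weight $y$ is odd. To extract a gain I would pass to the two–mode ansatz
\[
\psi(x,y)=u(x)\,g(y)+t\,v(x)\,g_1(y),
\]
with $g_1$ the first excited state of $h_{\mathrm{osc}}$ (odd, eigenvalue $3\sqrt{\omega^2+B^2}$), $v$ a fixed real bump with $v(0)\neq0$, $u(x)=\mathrm{e}^{-\mu|x|}$, and $t\in\mathbb{R}$. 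All cross terms in the free form and in $\|\psi\|^2$ vanish by parity, whereas the interaction now contributes the genuinely nonzero amount $2\lambda\,v(0)\,\langle g,\,y\,g_1\rangle\,t$ (note $\langle g,yg_1\rangle\neq0$ since $yg\propto g_1$). Collecting terms,
\[
Q(H_{\mathrm{Sm}}(A))[\psi]-\sqrt{\omega^2+B^2}\,\|\psi\|^2=\|u'\|^2+t^2\big(\|v'\|^2+2\sqrt{\omega^2+B^2}\,\|v\|^2\big)+2\lambda\,v(0)\,\langle g,\,y\,g_1\rangle\,t,
\]
the divergent contributions $\sqrt{\omega^2+B^2}\,\|u\|^2$ having cancelled. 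Minimising over $t$ leaves $\|u'\|^2$ diminished by a fixed positive constant proportional to $\lambda^2$, and since $\|u'\|^2\to0$ as $\mu\to0$ the expression becomes negative, so $H_{\mathrm{Sm}}(A)$ has an eigenvalue below $\sqrt{\omega^2+B^2}$ for every admissible $\lambda\neq0$.

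The main obstacle is conceptual rather than computational: the direct, first–order coupling to the threshold mode vanishes, so the eigenvalue is produced not by a naive attractive shift but by the one–dimensional weak–coupling mechanism in which the even ground mode $g$ couples to the odd excited mode $g_1$ through the weight $y$. Turning this into a rigorous bound requires verifying that the divergent $\|u\|^2$–terms cancel — which is precisely where the merging of $B^2y^2$ with $\omega^2y^2$ into the potential of $h_{\mathrm{osc}}$, special to the homogeneous field, is used — together with checking that $\psi$ lies in the form domain $\mathcal{D}$ and that the parity–based vanishing of the cross terms is legitimate; the remaining estimates are then elementary.
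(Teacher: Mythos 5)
Your proposal is correct, and for the existence part it takes a genuinely different route from the paper. Both arguments hinge on the same pivotal observation --- for \emph{real} trial functions in the Landau gauge the magnetic cross term $-2By\,\mathrm{Re}(i\psi_x\bar\psi)$ vanishes, so $B^2y^2$ merges with $\omega^2y^2$ and $Q(H_{\mathrm{Sm}}(A))$ coincides with the form of the nonmagnetic operator $\widetilde H=-\Delta+(\omega^2+B^2)y^2+\lambda y\,\delta(x)$ --- but they exploit it differently. The paper simply inserts a real eigenfunction of $\widetilde H$, whose existence (a nonempty finite family of eigenvalues below $\sqrt{\omega^2+B^2}$, with real eigenfunctions) is imported wholesale from Solomyak's analysis of the nonmagnetic model \cite{S04}; this makes the proof a few lines long but non-self-contained. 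You instead rebuild the weak-coupling mechanism by hand with the two-mode ansatz $u(x)g(y)+t\,v(x)g_1(y)$: the parity and orthogonality cancellations you list are all correct ($\langle g,g_1\rangle=0$, $\langle g,h_{\mathrm{osc}}g_1\rangle=0$, $\int_{\mathbb{R}} y\,g^2\,\mathrm{d}y=\int_{\mathbb{R}} y\,g_1^2\,\mathrm{d}y=0$), the surviving linear term $2\lambda v(0)\langle g,yg_1\rangle t$ is nonzero because $yg\propto g_1$, and since $\|u'\|^2=\mu\to0$ while the $t$-minimization yields a fixed negative contribution of order $\lambda^2$, the computation closes rigorously, with $\psi$ manifestly in the form domain $\mathcal{D}$. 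Your route buys self-containedness and makes transparent \emph{why} the eigenvalue exists --- the first-order shift vanishes and the binding comes from second-order coupling of the even threshold mode to the odd mode $g_1$ through the odd weight $y$ --- at the cost of length. Your treatment of the localization is in fact slightly more careful than the paper's: the fiber bound $Q[\psi]\ge\bigl(\omega^2-\tfrac14\lambda^2\bigr)\int_{\mathbb{R}^2} y^2|\psi|^2\,\mathrm{d}x\,\mathrm{d}y$ genuinely excludes $0$ as an eigenvalue, whereas the paper only invokes the positivity of Proposition~\ref{subcr-posit}, which by itself yields $[0,\sqrt{\omega^2+B^2})$. One caveat applies equally to both proofs: the hypothesis $\lambda>-2\omega$ as literally stated admits $\lambda=0$, for which the discrete spectrum is empty; your argument needs $2\lambda v(0)\langle g,yg_1\rangle\neq0$ and the paper needs $\widetilde H$ to possess bound states, so both tacitly require $\lambda\neq0$, consistent with the paper's standing convention $\lambda<0$.
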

 % -------------- %
\begin{proof}
To demonstrate the non-emptiness of the discrete spectrum one needs to construct a normalized function $u\in \mathrm{Dom}(Q(H_{\mathrm{Sm}} (A)))$ such that $Q(H_{\mathrm{Sm}} (A))(u)<\sqrt{\omega^2+B^2}.$ On the other hand, since $\lambda>-2\omega>-2\sqrt{\omega^2+B^2}$ the non-magnetic operator
$\widetilde{H}=-\Delta+(\omega^2+B^2) y^2+\lambda y\delta(x)$ has a nonempty finite set of eigenvalues  below $\sqrt{\omega^2+B^2}$, and moreover, the corresponding eigenfunctions can be chosen real-valued \cite{S04}. It is easy to see that for any such eigenfunction $u$ we have
 % -------------- %
$$
Q(H(A))(u)=Q(\widetilde{H})(u)<\sqrt{\omega^2+B^2}\,,
$$
 % -------------- %
and since by Proposition~\ref{subcr-posit} operator $H_{\mathrm{Sm}} (A)$ is positive, the claimed is proved.
\end{proof}

%%%%%%%%%%%%%%%%%%%%%%%%%%%%%%%%%%%%%%%%%%%%%%%%%%%%%%%%%%%%%
\subsection{The supercritical case} \label{ss:Smil-supercrit}

Let us now turn to the situation when the coupling constant surpasses the critical value. As discussed in the opening of Sec.~\ref{s:Smil}, we know that operator $H_{\mathrm{Sm}}(A)\upharpoonright \mathcal{D}_0$ has self-adjoint extensions and in the following we will use the symbol $H_{\mathrm{Sm} (A)}$ for any of them.
% ------------- %
\begin{theorem} \label{th:supercrit}
$\sigma(H_{\mathrm{Sm}} (A))=\mathbb{R}$ holds provided $\lambda<-2\omega$.
\end{theorem}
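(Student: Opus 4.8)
The plan is to show that every real $\mu$ lies in the spectrum by producing a singular Weyl sequence inside the core $\mathcal D_0$. Since the conjugation argument only guarantees that $H_{\mathrm{Sm}}(A)\!\upharpoonright\!\mathcal D_0$ \emph{has} self-adjoint extensions, and since each such extension coincides with the original symmetric operator on $\mathcal D_0$, it suffices to construct $\varphi_R\in\mathcal D_0$ with $\|\varphi_R\|=1$, $\varphi_R\rightharpoonup 0$ and $\|(H_{\mathrm{Sm}}(A)-\mu)\varphi_R\|\to 0$; this places $\mu$ in $\sigma_{\mathrm{ess}}$ of \emph{every} extension. For $\mu\ge\sqrt{\omega^2+B^2}$ the claim is already contained in Theorem~\ref{th:subcritess}, so the real task is to reach the values below that threshold, although the construction below works for any $\mu$.

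I would localise the sequence in the attractive channel $y>0$, where $\lambda y<0$. Fixing the Landau gauge $A=(-By,0)$ and freezing $y$, the transverse operator is, exactly as in Proposition~\ref{subcr-posit}, unitarily equivalent to $y^2L$; by supercriticality $\inf\sigma(L)=\omega^2-\tfrac14\lambda^2=:-c_0^2<0$, so its lowest level equals $-c_0^2y^2\to-\infty$, with eigenfunction $e^{-iByx}\psi_0(xy)$, where $\psi_0(s)=\sqrt{c}\,e^{-c|s|}$, $c=|\lambda|/2$, is the ground state of $L$ read in the scaled channel variable $s=xy$ ($L\psi_0=-c_0^2\psi_0$). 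The natural trial function is then the genuinely two-dimensional WKB object attached to this channel,
$$u_0(x,y)=k(y)^{-1/2}\,\chi\!\Big(\frac{y}{R}\Big)\,e^{-iByx}\,\psi_0(xy)\,e^{i\theta(y)}\,,\qquad \theta'(y)=k(y):=\sqrt{\mu+c_0^2y^2}\,,$$
with $\chi$ a fixed cut-off supported in $(1,2)$ and $R\to\infty$. Here $e^{-iByx}$ absorbs the magnetic term, $\psi_0(xy)$ pins the transverse profile to $s=xy$, and $k^{-1/2}e^{i\theta}$ is the usual generalised eigenfunction of the inverted oscillator $-\frac{\mathrm{d}^2}{\mathrm{d}y^2}-c_0^2y^2$, whose spectrum is all of $\mathbb{R}$.

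When $(H_{\mathrm{Sm}}(A)-\mu)$ hits $u_0$, the $O(y^2)$ transverse energy $-c_0^2y^2u_0$ cancels the leading $O(y^2)$ term produced by $-\partial_y^2$ acting on the phase, while the choice of $k$ together with the amplitude $k^{-1/2}$ removes the would-be $O(y)$ transport contribution; what survives on the $\psi_0$-fibre is only the ordinary WKB error of relative order $R^{-2}$. The obstacle is the surviving \emph{off-diagonal} term: differentiating $\psi_0(xy)$ in $y$ produces a dilation $\tfrac{s}{y}\psi_0'$ which, multiplied by the $O(y)$ phase gradient, gives a contribution $\propto e^{i\theta}\,w_1(xy)$ with $w_1=\psi_0+2s\psi_0'$ that is orthogonal to $\psi_0$ but only $O(1)$ in norm. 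This is the heart of the matter and the reason a plain Born--Oppenheimer product fails: because the transverse width $\sim 1/y$ and the longitudinal wavelength $\sim 1/y$ are \emph{comparable}, the adiabatic coupling does not become small and $\|(H_{\mathrm{Sm}}(A)-\mu)u_0\|$ stays of the order of $\|u_0\|$.

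The resolution I would carry out is to cancel this term by a correction of the \emph{same} WKB shape, not by separating variables. Setting $u_1(x,y)=\tfrac{i c_0}{y^2}k^{-1/2}\chi(y/R)\,e^{-iByx}\,\rho(xy)\,e^{i\theta}$ with $\rho:=(L+c_0^2)^{-1}w_1$ — well defined because $w_1\perp\psi_0$ and $L+c_0^2$ is nonnegative with one-dimensional kernel $\mathrm{span}\,\psi_0$ and the rest of its spectrum above the gap $\omega^2+c_0^2=\tfrac14\lambda^2>0$ — the leading action $y^2(L+c_0^2)\rho=y^2w_1$ exactly balances the offending $O(1)$ term. Crucially, keeping the common phase $e^{i\theta}$ and a profile on the scaled variable $xy$ makes the residual produced by $u_1$ smaller by a factor $O(y^{-2})$, so the scheme iterates into an asymptotic series in $y^{-2}$ whose truncation leaves a residual $O(R^{-2})$ relative to the norm. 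Collecting the pieces, normalising, and noting that every term lies in $\mathcal D_0$ (the $\delta$-matching is inherited from $\psi_0$ and from $\rho$, which solves $(L+c_0^2)\rho=w_1$ and hence satisfies the same jump condition, while $\chi$ is smooth) and that $\varphi_R\rightharpoonup0$ because the supports run off to $y=\infty$, one concludes $\mu\in\sigma(H_{\mathrm{Sm}}(A))$ for every $\mu\in\mathbb{R}$. I expect the delicate step to be precisely the uniform control of this $y^{-2}$ expansion — showing that each successive correction is genuinely smaller and stays in the domain — since everything else reduces to one-dimensional WKB bookkeeping for the inverted oscillator.
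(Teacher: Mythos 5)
Your skeleton is genuinely the paper's (channel profile $h(xy)$ on the scaled variable, the WKB phase with $\theta'(y)=\sqrt{\mu+c_0^2y^2}$ — identical to the paper's $\epsilon_\mu$ — and a correction $f(xy)/y^2$ obtained by Fredholm inversion on the fibre), but two of your concrete choices break the argument. The fatal one is the cutoff. With any trial function of the form $a(y)\,\chi(y/R)\,\psi_0(xy)\,e^{i\theta(y)}$ and power amplitude $a(y)=y^{-\beta}$, the transverse scaling gives $\int_{\R}|\psi_0(xy)|^2\,\mathrm{d}x=\|\psi_0\|^2/y$, so $\|\varphi_R\|^2\asymp R^{-2\beta}\int z^{-1-2\beta}\chi^2(z)\,\mathrm{d}z$, while the unavoidable cutoff term $-2i\theta'(y)R^{-1}\chi'(y/R)\times(\mathrm{rest})$ has squared norm $\asymp c_0^2R^{-2\beta}\int z^{1-2\beta}\chi'(z)^2\,\mathrm{d}z$, because $\theta'(y)\sim c_0y\sim c_0R$ on the support. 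The ratio is a constant \emph{independent of} $R$: the growth of the local momentum exactly cancels the $1/R$ gain from dilating a fixed cutoff shape, and after the substitution $z=e^t$ one sees the Weyl quotient is bounded below by $c_0^2$ times the Dirichlet–Poincar\'e constant $(\pi/\ln 2)^2$ of your interval $(1,2)$. So your claimed relative residual $O(R^{-2})$ is false and no choice of fixed $\chi$ and $R\to\infty$ yields a Weyl sequence. This is precisely why the paper uses the expanding logarithmic cutoffs $\chi_k\in C_0^2(1,k)$ subject to \eqref{chi.conditions}: in the variable $t=\ln z$ those conditions become the free Dirichlet problem on $(0,\ln k)$, so $\int_1^k z(\chi_k')^2\,\mathrm{d}z<\varepsilon$ is achievable only with support ratio $\ln k\gtrsim\varepsilon^{-1/2}$, and $k=k(\varepsilon)\to\infty$ is then fed into a diagonal argument (the construction is imported from \cite{BE14}). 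Without this ingredient the proof does not close.

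The second flaw is the amplitude $k(y)^{-1/2}$, which is internally inconsistent with your $w_1$. The transverse profile already supplies the WKB half-density, since $\|\psi_0(\cdot\,y)\|_{L^2(\mathrm{d}x)}=y^{-1/2}\|\psi_0\|\sim k(y)^{-1/2}$; adding an explicit $k^{-1/2}$ over-damps. Concretely, with $a=\theta'^{-1/2}$ the usual cancellation removes the $\theta''$ terms, and the surviving $O(1)$ fibre inhomogeneity is proportional to $2s\psi_0'$ alone (plus the magnetic cross term $\propto Bs\psi_0$ regenerated when $\partial_y$ hits $e^{-iByx}$ — a term your $w_1$ omits, though it is odd and hence harmless for orthogonality). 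But $\langle 2s\psi_0',\psi_0\rangle=-\|\psi_0\|^2\neq 0$, so the inversion $(L+c_0^2)^{-1}w_1$ you invoke does not exist for \emph{your} ansatz. Your stated $w_1=\psi_0+2s\psi_0'$, which is orthogonal to $\psi_0$, is in fact the inhomogeneity of the \emph{constant-amplitude} ansatz: there the $-i\theta''$ term survives and restores orthogonality, exactly as in the paper's verification $\int(2ith'+ih+2Bth)h\,\mathrm{d}t=0$ preceding \eqref{diff.eq.}. The repair is to drop the $k^{-1/2}$, keep amplitude $1$ (the paper's Ansatz \eqref{sequence} with phase $e^{i\epsilon_\mu(y)}$), include the magnetic term $2Bth$ in the fibre equation, and replace your fixed cutoff by the $\chi_k$ of \eqref{chi.conditions}; with those changes your argument coincides with the paper's proof, and the iterated $y^{-2}$ expansion you anticipate is unnecessary — a single correction suffices.
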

 % ------------- %
\begin{proof}
To check that any real number $\mu$ belongs to the spectrum of $H_{\mathrm{Sm}} (A)$ we employ Weyl's criterion finding a sequence $\{\psi_k\}_{k=1}^\infty\subset D(H_{\mathrm{Sm}} (A))$ such that $\|\psi_k\|=1$ satisfying
 % ------------- %
$$
\|H_{\mathrm{Sm}} (A)\psi_k-\mu\psi_k\|\to0 \quad\text{as}\quad k\to\infty\,;
$$
 % ------------- %
note that one need not require that $\{\psi_k\}$ contains no convergent subsequence because the spectrum covering the whole real axis cannot be anything else than essential. To this aim we modify the method of \cite{BE14} without repetition of the parts that do not change. With scaling transformations in mind we may suppose that $\inf\sigma(L)=\omega^2-\frac14\lambda^2=-1$ corresponding to the single eigenvalue of the operator which is simple and associated with the normalized eigenfunction $h=\sqrt{\frac{|\lambda|}{2}} e^{-|\lambda|\,|t|/2}$.

As in \cite{BE14} we will first show that $0\in\sigma(H_{\mathrm{Sm}} (A))$. We fix an $\varepsilon>0$ and choose a positive integer $k=k(\varepsilon)$ to which we associate a function $\chi_k\subset C_0^2(1,k)$ such that
 % ------------- %
\begin{equation}\label{chi.conditions}
\int_1^k\frac{1}{z}\chi_k^2(z)\,\mathrm{d}z=1 \quad\text{and}\quad
\int_1^k z(\chi'_k(z))^2\,\mathrm{d}z<\varepsilon\,;
\end{equation}
 % ------------- %
we know from \cite{BE14} that such functions can be constructed. Then we define
 % ------------- %
\begin{equation}
\label{sequence} \psi_k(x,y):=h(x
y)\,\e^{iy^2/2}\chi_k\left(\frac{y}{n_k}\right) +\frac{f(x
y)}{y^2}\, \e^{iy^2/2}\chi_k\left(\frac{y}{n_k}\right)\,,
\end{equation}
 % ------------- %
where the smooth function $f\in\mathrm{Dom} (L)$ and the positive
integer $n_k\in\mathbb{N}$ will be chosen later. The functions
\eqref{sequence} belong to $\mathrm{Dom}(H_{\mathrm{Sm}}(A))$ by
construction and have the following property \cite{BE14}.
 % ------------- %
\begin{lemma}
$\|\psi_k\|_{L^2(\mathbb{R}^2)}\ge\frac{1}{2}$ holds provided $n_k$ is large enough.
\end{lemma}
 % ------------- %
\noindent Next we have to show that the functions $\psi_k$
approximate the generalized eigen\-function corresponding to zero
energy.
 % ------------- %
\begin{lemma}
$\|H_{\mathrm{Sm}} (A)\psi_k\|_{L^2(\mathbb{R}^2)}^2<c\varepsilon$ holds with a $c$ independent of $k$ provided $n_k$ is large enough.
\end{lemma}
 % ------------- %
\begin{proof}
We have to estimate the following integral,
 % ------------- %
$$
\int_{\mathbb{R}^2}\left|H_{\mathrm{Sm}}
(A)\psi_k\right|^2(x,y)\,\mathrm{d}x\,\mathrm{d}y
=\int_{\mathbb{R}^2}\biggl|-\frac{\partial^2\psi_k}{\partial x^2}-
\frac{\partial^2\psi_k}{\partial y^2}+2i B x \frac{\partial
\psi_k}{\partial y}+B^2 x^2 \psi_k +\omega^2 y^2\psi_k\biggr|^2\,
\mathrm{d}x\,\mathrm{d}y\,.
$$
 % ------------- %
We know from \cite{BE14} that the claim is valid if $B=0$, hence it
remains to deal with the additional terms associated with the
magnetic field. We have
 % ------------- %
\begin{align*}
\frac{\partial \psi_k}{\partial y}=&\biggl(x h^\prime(x
y)\,\chi_k\left(\frac{y}{n_k}\right)+\frac{1}{n_k}h(x
y)\,\chi_k^\prime\left(\frac{y}{n_k}\right)+\frac{x}{y^2} f^\prime(x
y)\,\chi_k\left(\frac{y}{n_k}\right) +\frac{i}{y}f(x
y)\,\chi_k\left(\frac{y}{n_k}\right)\\ & +\frac{1}{n_k y^2}f(x
y)\,\chi_k^\prime\left(\frac{y}{n_k}\right)-\frac{2}{y^3}f(x
y)\,\chi_k\left(\frac{y}{n_k}\right)+i y h(x
y)\,\chi_k\left(\frac{y}{n_k}\right)\biggr)\,\e^{iy^2/2}
\end{align*}
 % ------------- %
which allows us to check that choosing $n_k$ large enough one can
make norms of all the terms in the expression of $x \frac{\partial
\psi_k}{\partial y}$ except of the last one small enough, because
 % ------------- %
\begin{align*}
\int_{\mathbb{R}^2} \left|x^2 h^\prime(x y)\,\e^{iy^2/2}\chi_k
\left(\frac{y}{n_k}\right)\right|^2\,\mathrm{d}x\,\mathrm{d}y
&\le\frac{1}{n_k^4}\int_{\mathbb{R}}|h^\prime(t)|^2\,\mathrm{d}t\int_1^k|\chi_k(z)|^2\,\mathrm{d}z\,,\\[.2em]
\frac{1}{n_k^2}\int_{\mathbb{R}^2}\left|h(x
y)\,\e^{iy^2/2}\chi^\prime_k \left(\frac{y}{n_k}\right)
\right|^2\,\mathrm{d}x\,\mathrm{d}y&\le
\frac{1}{n_k^2}\int_{\mathbb{R}}|h(t)|^2\,\mathrm{d}t\,\int_1^k|\chi^\prime_k(z)|^2\,\mathrm{d}z\,,
\\[.2em]\int_{\mathbb{R}^2}\left|\frac{x}{y^2}f^\prime(x
y)\,\e^{iy^2/2}\chi_k\left(\frac{y}{n_k}\right)\right|^2\,\mathrm{d}x\,\mathrm{d}y
&\le\frac{1}{n_k^4}\int_{\mathbb{R}}|f^\prime(t)|^2\,\mathrm{d}t\int_1^k|\chi_k(z)|^2\,\mathrm{d}z\,,\\[.2em]
\int_{\mathbb{R}^2}\left|\frac{i}{y} f(x
y)\,\e^{iy^2/2}\chi_k\left(\frac{y}{n_k}\right)\right|^2\,\mathrm{d}x\,\mathrm{d}y
&\le\frac{1}{n_k^2}\int_{\mathbb{R}}|f(t)|^2\,\mathrm{d}t\int_1^k|\chi_k(z)|^2\,\mathrm{d}z\,,\\[.2em]
\int_{\mathbb{R}^2}\left|\frac{1}{n_k y^2} f(x
y)\,\e^{iy^2/2}\chi_k^\prime\left(\frac{y}{n_k}\right)\right|^2\,\mathrm{d}x\,\mathrm{d}y
&\le\frac{1}{n_k^6}\int_{\mathbb{R}}|f(t)|^2\,\mathrm{d}t\,\int_1^k|\chi^\prime_k(z)|^2\,\mathrm{d}z\,,\\[.2em]
\int_{\mathbb{R}^2}\left|\frac{2}{y^3} f(x
y)\,\e^{iy^2/2}\chi_k\left(\frac{y}{n_k}\right)\right|^2\,\mathrm{d}x\,\mathrm{d}y&\le
\frac{4}{n_k^6}\int_{\mathbb{R}}|f(t)|^2\,\mathrm{d}t\,\int_1^k|\chi_k(z)|^2\,\mathrm{d}z\,.
\end{align*}
 % ------------- %
In a similar way one can check that for large $n_k$ the integral
$\int_{\mathbb{R}^2}\left|x^2 \psi_k\right|^2\,
\mathrm{d}x\,\mathrm{d}y$ is less than $\varepsilon$. This yields
the estimate
 % ------------- %
\begin{align*} \int_{\mathbb{R}^2} & \left|H_{\mathrm{Sm}}
(A)\psi_k\right|^2(x,y)\,\mathrm{d}x\,\mathrm{d}y\le
21\int_{n_k}^{kn_k}\int_{\mathbb{R}}\biggl|y^2\left(h''(xy)-\omega^2h(xy)-
h(xy)\right)\chi_k\left(\frac{y}{n_k}\right) \\[.2em] &
+ih(xy)\chi_k\left(\frac{y}{n_k}\right)
+f''(xy)\chi_k\left(\frac{y}{n_k}\right)+2ixyh'(xy)\chi_k\left(\frac{y}{n_k}\right)
+\frac{2iy}{n_k}h(xy)\chi'_k\left(\frac{y}{n_k}\right)
\\[.2em] & -f(xy)\chi_k\left(\frac{y}{n_k} \right)
-\omega^2\,f(xy)\chi_k\left(\frac{y}{n_k}\right) +2B x y h(x
y)\chi_k\left(\frac{y}{n_k}\right)\biggr|^2\,\mathrm{d}x\,\mathrm{d}y
+21\varepsilon\,,
\end{align*}
 % ------------- %
where the coefficient in front of the integrals comes from the
number of the summands. Using the fact that $Lh=-h$ and applying the
Cauchy inequality, the above inequality implies
 % ------------- %
\begin{eqnarray*}
\hspace{-5em}\lefteqn{\frac{1}{21}\int_{\mathbb{R}^2}|H_{\mathrm{Sm}}
(A)\psi_k|^2(x,y)\,\mathrm{d}x\,\mathrm{d}y<\int_{n_k}^{kn_k}
\int_{\mathbb{R}}\biggl|\biggl(f''(xy)+2ixyh'(xy)+ih(xy)-f(xy)}
\\&& -\omega^2\,f(xy)+2B x y h(x
y)\biggr)\chi_k\left(\frac{y}{n_k}\right)
   +\frac{2iy}{n_k}h(xy)\chi_k'
\left(\frac{y}{n_k}\right)\biggr|^2\,\mathrm{d}x\,\mathrm{d}y
+\varepsilon \\ && \hspace{-1em}
\le2\int_1^k\frac{1}{z}|\chi_k(z)|^2\,\mathrm{d}z\,\int_{\mathbb{R}}
\biggl|-f''(t)+f(t)\left(1+\omega^2)\right)-2ith'(t)\\&&-ih(t)-2B t
h(t)\biggr|^2 \mathrm{d}t
+8\int_1^kz|\chi_k'(z)|^2\,\mathrm{d}z+\varepsilon \\ &&
\hspace{-1em} \le2\int_{\mathbb{R}}
\biggl|-f''(t)+f(t)\left(1+\omega^2\right)-2ith'(t)-ih(t)-2B t
h(t)\biggr|^2 \,\mathrm{d}t+9\varepsilon\,.
\end{eqnarray*}
 % ------------- %
It is easy to check that
 % ------------- %
$$
\int_{\mathbb{R}}(2ith'(t)+ih(t)+2B t h(t)) h(t)\,\mathrm{d}t=0\,,
$$
 % ------------- %
which together with the simplicity of the eigenvalue $-1$
establishes the existence of the solution for the differential
equation
 % ------------- %
\begin{equation}\label{diff.eq.}
-f''(t)+f(t)\left(1+\omega^2\right)-2ith'(t)-ih(t)-2B
t h(t)=0
\end{equation}
 % ------------- %
belonging to $\mathrm{Dom}(L)$ which will use in the Ansatz
\eqref{sequence}. With this choice the last integral in the above
estimate vanishes, which gives
 % ------------- %
\begin{equation}
\label{final2} \int_{\mathbb{R}^2}|H_{\mathrm{Sm}}
(A)\psi_k|^2(x,y)\,\mathrm{d}x\,\mathrm{d}y\le 189\varepsilon
\end{equation}
 % ------------- %
concluding this the proof of the lemma.
\end{proof}

Now we can complete the proof of the theorem. We fix a sequence
$\{\varepsilon_j\}_{j=1}^\infty$ such that $\varepsilon_j\searrow0$
holds as $j\to\infty$ and to any $j$ we construct a function
$\psi_{k(\varepsilon_j)}$. As we have mentioned it is not necessary
that such a sequence converges weakly to zero, but we can achieve
that at no extra expense by choosing the corresponding numbers in
such a way that $n_{k(\varepsilon_j)}>k(\varepsilon_{j-1})
n_{k(\varepsilon_{j-1})}$. The norms of $H_{\mathrm{Sm}}
(A)\psi_{k(\varepsilon_j)}$ satisfy inequality which (\ref{final2})
with $189\varepsilon_j$ on the right-hand side, which yields the
desired result.

For any nonzero real number $\mu$ we use the same procedure
replacing \eqref{sequence} with
 % ------------- %
$$
\psi_k(x,y)=h(xy)\,\e^{i\epsilon_\mu(y)}
\chi_k\left(\frac{y}{n_k}\right)+
\frac{f(xy)}{y^2}\,\e^{i\epsilon_\mu(y)}
\chi_k\left(\frac{y}{n_k}\right)\,,
$$
 % ------------- %
where $\epsilon_\mu(y):= \displaystyle{\int_{\sqrt{|\mu|}}^y
\sqrt{t^2+\mu}\,\mathrm{d}t}$ and the functions $f,\,\chi_k$ are the
same as above. Repeating the estimates with the modified exponential
function one can check that to any positive $\varepsilon$ one can
choose the integer $n_k$ large enough so that
 % ------------- %
\begin{align*}
\biggl\|\frac{\partial^2\psi_k}{\partial y^2} &
\,\e^{-i\varepsilon_\mu(y)}-2i B x \frac{\partial \psi_k}{\partial
y} \,\e^{-i\varepsilon_\mu(y)} +\mu\psi_k
\,\e^{-i\varepsilon_\mu(y)} \\[.2em] & -
\e^{-iy^2/2}\biggl(\frac{\partial^2}{\partial y^2}\biggl(\psi_k
\e^{-i\varepsilon_\mu(y)+iy^2/2}\biggr) -2i B x
\frac{\partial}{\partial y}\biggl(\psi_k
\e^{-i\varepsilon_\mu(y)+iy^2/2}\biggr)\biggr)\biggr\|_{L^2(\mathbb{R}^2)}
<\varepsilon
\end{align*}
 % ------------- %
holds. Using further the identity $\frac{\partial^2\psi_k}{\partial
x^2}\,\e^{-i\varepsilon_\mu(y)}= \e^{-iy^2/2}
\frac{\partial^2}{\partial x^2}\bigl(\psi_k\,
\e^{-i\varepsilon_\mu(y)+iy^2/2}\bigr)$ we get
 % ------------- %
\begin{eqnarray*}
\hspace{-4em}\lefteqn{\|H_{\mathrm{Sm}}
(A)\psi_k-\mu\psi_k\|_{L^2(\mathbb{R}^2)}=\Bigl\|(H_{\mathrm{Sm}}
(A)\psi_k) \e^{-i\varepsilon_\mu(y)} -\mu\psi_k
\,\e^{-i\varepsilon_\mu(y)}\Bigr\|_{L^2(\mathbb{R}^2)}} \\ &&
\hspace{-1em} <\Bigl\|\e^{-iy^2/2}H_{\mathrm{Sm}} (A)\Bigl (\psi_k
\,\e^{-i\varepsilon_\mu(y)+iy^2/2}\Bigr)\Bigr\|_{L^2(\mathbb{R}^2)}+\varepsilon\,;
\end{eqnarray*}
 % ------------- %
now we can use the result of the first part of proof to conclude the
proof.
\end{proof}

 % ------------- %
\begin{remark}
{\rm In view of Theorem~\ref{th:subcritess} we could have restricted
our attention to the numbers $\mu<\sqrt{\omega^2+B^2}$ only.
Avoiding this restriction makes sense, however, showing that in the
supercritical case one can construct \emph{for any $\mu$} a Weyl
sequence with the support in the vicinity of the $y$ axis. Looking
at the problem from the dynamical point of view as in \cite{Gu11},
this fact is connected with the existence of states escaping to
infinity along the singular channel.}
\end{remark}
 % ------------- %

%%%%%%%%%%%%%%%%%%%%%%%%%%%%%%
\subsection{The critical case}
\label{ss:Smil-crit}

If the two competing forces are in exact balance,
$\lambda=-2\omega$, the quadratic form is still positive by
Proposition~\ref{subcr-posit}, hence $H_{\mathrm{Sm}}(A)$ can be
defined as the Friedrich's extension of the operator initially
defined on set $\mathcal{D}_0$.
 % -------------- %
\begin{theorem}\label{th:critess}
Let $\lambda=-2\omega$, then under the stated assumptions we have
 % -------------- %
$$\sigma (H_{\mathrm{Sm}} (A))=\sigma_{\mathrm{ess}}
(H_{\mathrm{Sm}})=[0, \infty)\,.
$$
 % -------------- %
\end{theorem}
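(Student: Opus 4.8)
The plan is to establish the two set inclusions that pin the spectrum down. By Proposition~\ref{subcr-posit} we already know $H_{\mathrm{Sm}}(A)\ge0$, so $\sigma(H_{\mathrm{Sm}}(A))\subset[0,\infty)$, while Theorem~\ref{th:subcritess} gives $[\sqrt{\omega^2+B^2},\infty)\subset\sigma_{\mathrm{ess}}(H_{\mathrm{Sm}}(A))$. It therefore suffices to show that every $\mu\in[0,\sqrt{\omega^2+B^2})$ lies in the essential spectrum; then the chain $[0,\infty)\subset\sigma_{\mathrm{ess}}\subset\sigma\subset[0,\infty)$ forces all three sets to agree and, since a full half-line has no isolated points, leaves no discrete spectrum. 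The feature that makes the critical coupling special is that $\inf\sigma(L)=\omega^2-\tfrac14\lambda^2=0$ is now a \emph{simple eigenvalue} of $L$ lying below $\sigma_{\mathrm{ess}}(L)=[\omega^2,\infty)$, with normalized eigenfunction $h(t)=\sqrt{\omega}\,\mathrm{e}^{-\omega|t|}$ obeying $Lh=0$.

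I would build the Weyl sequences along the singular channel in the manner of the supercritical Theorem~\ref{th:supercrit}, but now around a \emph{zero-energy} transverse mode. Fixing $\mu\ge0$ and $\chi_k$ as in \eqref{chi.conditions}, I set
\[
\psi_k(x,y):=h(xy)\,\mathrm{e}^{\,i\sqrt{\mu}\,y}\,\chi_k\!\left(\frac{y}{n_k}\right),
\]
with $n_k\in\mathbb{N}$ large; note that $\mathrm{supp}\,\chi_k\subset(1,k)$ confines the support to $y\in(n_k,kn_k)\subset(0,\infty)$. Since $h$ has the kink $h'(0+)-h'(0-)=-2\omega\,h(0)=\lambda\,h(0)$, the function $h(xy)$ obeys the matching condition defining $\mathrm{Dom}(H_{\mathrm{Sm}}(A))$, so $\psi_k$ is admissible. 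The algebraic core is that the transverse part of the operator annihilates this mode: using the distributional identity $h''=\omega^2h-2\omega\,h(0)\,\delta$ together with $\delta(xy)=y^{-1}\delta(x)$ for $y>0$ one obtains
\[
\left[-\frac{\partial^2}{\partial x^2}+\omega^2 y^2+\lambda y\,\delta(x)\right]h(xy)=y^2\,(Lh)(xy)=0,
\]
the $\delta$-contributions cancelling precisely because $\lambda=-2\omega$.

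With the transverse term eliminated, only the $y$-dynamics remains, and I would bound $\|(H_{\mathrm{Sm}}(A)-\mu)\psi_k\|$ by applying $-\partial_y^2+2iBx\,\partial_y+B^2x^2$ to $\psi_k$. The phase is tuned so that its leading effect---the squared phase derivative $(\sqrt{\mu})^2=\mu$ multiplying $h(xy)$---cancels $\mu\psi_k$ exactly. Every remaining term carries either an explicit factor $x$ (from differentiating $h(xy)$ in $y$, or from the magnetic contributions) or a derivative of $\chi_k$, hence a factor $n_k^{-1}$; after the substitution $t=xy$ each is seen to contribute $\mathcal{O}(n_k^{-1})$ in $L^2$, with the crude bounds $\int_1^k z^{-3}\chi_k^2\,\mathrm{d}z\le\int_1^k z^{-1}\chi_k^2\,\mathrm{d}z=1$ and $\int_1^k z^{-1}(\chi_k')^2\,\mathrm{d}z\le\int_1^k z(\chi_k')^2\,\mathrm{d}z$ doing all the work. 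Thus $\|(H_{\mathrm{Sm}}(A)-\mu)\psi_k\|\to0$ as $n_k\to\infty$ while $\|\psi_k\|=1$; choosing the integers so that the supports $(n_k,kn_k)$ are pairwise disjoint gives $\psi_k\rightharpoonup0$, whence $\mu\in\sigma_{\mathrm{ess}}(H_{\mathrm{Sm}}(A))$.

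The step I expect to be the genuine obstacle, and the one distinguishing this case from Theorem~\ref{th:supercrit}, is the control of the non-cancelling terms. There the phase $\mathrm{e}^{iy^2/2}$ has a derivative growing like $y$, which turns residuals such as $-2ith'(t)$ and $-2Bt\,h(t)$ into order-one quantities and forces one to solve the inhomogeneous equation \eqref{diff.eq.} for a correction $f$. Here the phase $\mathrm{e}^{i\sqrt{\mu}y}$ has constant derivative, so the analogous residuals acquire an extra power of $y^{-1}\lesssim n_k^{-1}$ on the support; consequently no correction function is needed and a single limit $n_k\to\infty$ replaces the double limit of the supercritical argument. Verifying carefully that \emph{every} term---in particular the magnetic cross term $2iBx\,\partial_y\psi_k$---indeed decays at this rate is the main technical point of the proof.
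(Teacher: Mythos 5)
Your proposal is correct and follows essentially the same route as the paper: the same Weyl Ansatz $h(xy)\,\mathrm{e}^{i\sqrt{\mu}\,y}\chi\left(y/n\right)$ supported in the channel, the same exploitation of $Lh=0$ at criticality (so that no correction function $f$ is needed, unlike the supercritical case), and Proposition~\ref{subcr-posit} for the inclusion $\sigma\subset[0,\infty)$. The only cosmetic deviations are that you borrow the $\chi_k$ family of \eqref{chi.conditions} where the paper uses a single fixed cutoff on $[1,2]$ (your choice even yields $\|\psi_k\|=1$ exactly), and that you invoke Theorem~\ref{th:subcritess} for $\mu\ge\sqrt{\omega^2+B^2}$ while the paper runs the construction for all $\mu\ge0$; neither affects the validity.
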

% ------------- %
\begin{proof}
In view of Proposition~\ref{subcr-posit} it is enough to show that
$\sigma_{\mathrm{ess}} (H_{\mathrm{Sm}}) (A)\supset [0, \infty)$. We
proceed as in the previous section: to any $\mu\ge 0$ we are again
going to construct a sequence $\{\psi_n\}_{n=1}^\infty\subset
D(H_{\mathrm{Sm}} (A))$ of unit vectors, $\|\psi_n\|=1$, such that
 % ------------- %
$$ %\begin{equation}
\|H_{\mathrm{Sm}} (A)\psi_n-\mu\psi_n\|\to 0 \qquad\text{as}\quad
n\to\infty\,.
$$% ------------- %
holds. The operator $L_0=-\frac{\mathrm{d}^2}{\mathrm{d}x^2}+\lambda
\delta(x)$ on $L^2(\mathbb{R})$ has a single eigenvalue equal to
$-\frac14\lambda^2$, hence its spectral threshold of is an isolated
eigenvalue corresponding to the normalized eigenfunction $h$, the
same as in the previous section. Given a smooth function $\chi$ with
$\mathrm{supp}\,\chi\subset[1, 2]$ and satisfying $\int_1^2
\chi^2(z)\, \mathrm{d}z=1$, we put
 % ------------- %
\begin{equation}
\label{sequence2} \psi_n(x,y):=h(x y)\,\e^{i \sqrt{\mu}
y}\chi\left(\frac{y}{n}\right)\,,
\end{equation}
 % ------------- %
where $n\in\mathbb{N}$ is to be chosen later. For the moment we just
note that choosing $n$ large enough one can achieve that
$\|\psi_n\|_{L^2(\mathbb{R}^2)}\ge\frac{1}{\sqrt{2}}$ as the
following estimates show,
 % ------------- %
\begin{eqnarray*}
\hspace{-4em}\int_{\mathbb{R}^2}\left|h(xy)\,\e^{i\sqrt{\mu}
y}\,\chi\left(\frac{y}{n}\right)\right|^2\,
\mathrm{d}x\,\mathrm{d}y= \int_{n}^{2n}\int_{\mathbb{R}}
\left|h(xy)\,\chi\left(\frac{y}{n}\right)
\right|^2\,\mathrm{d}x\,\mathrm{d}y \\[.2em] \hspace{-4em}
=\int_{n}^{2n}\int_{\mathbb{R}}\frac{1}{y}\left|h(t)\,\chi\left(\frac{y}{n}\right)
\right|^2\,\mathrm{d}t\,\mathrm{d}y
=\int_{n}^{2n}\frac{1}{y}\left|\chi\left(\frac{y}{n}\right)
\right|^2\,\mathrm{d}y=\int_1^2\frac{1}{z}\left|\chi(z)
\right|^2\,\mathrm{d}z\ge\frac{1}{2}\,.
\end{eqnarray*}
 % ------------- %
Next  we are going to show that $\|H_{\mathrm{Sm}} (A)\psi_n
-\mu\psi_n \|_{L^2(\mathbb{R}^2)}^2<\varepsilon$ holds for a
suitably chosen $n=n(\varepsilon)$. By a straightforward computation
we express $\frac{\partial^2\psi_n}{\partial x^2}$,
$\frac{\partial^2\psi_n}{\partial y^2}$, and $x \frac{\partial
\psi_n}{\partial y}$; in the same way as in the previous section one
can check that the norms of the last two can be made as small as we
wish by choosing $n$ sufficiently large. Moreover, we have
 % ------------- %
$$
\int_{\mathbb{R}^2}\left|x^2 h(x y)\,\e^{i \sqrt{\mu}
y}\chi\left(\frac{y}{n}\right)
\right|^2\,\mathrm{d}x\,\mathrm{d}y\le\frac{1}{n^4}
\int_1^2\frac{|\chi(z)|^2}{z}\mathrm{d}z\,\int_{\mathbb{R}}|h(t)|^2\,\mathrm{d}t\,,
$$
 % ------------- %
hence this term too can be made small. This allows us to estimate
the expression in question as
 % ------------- %
\begin{eqnarray*}
\hspace{-5em}\int_{\mathbb{R}^2}|H_{\mathrm{Sm}} (A)\psi_n-\mu
\psi_n|^2(x,y)\,\mathrm{d}x\,\mathrm{d}y \\
\hspace{-4em}=\int_{\mathbb{R}^2}\left|-\frac{\partial^2\psi_n}{\partial
x^2}- \frac{\partial^2\psi_n}{\partial y^2}+2i B x\frac{\partial
\psi_n}{\partial y}+B^2 x^2\psi_n+\omega^2y^2\psi_n-\mu
\psi_n\right|^2\, \mathrm{d}x\,\mathrm{d}y
\\\hspace{-4em}=\int_{n}^{2n}\int_{\mathbb{R}}\biggl|y^2\left(-h''(xy)
+\omega^2h(xy)\right)\chi\left(\frac{y}{n}\right)\biggr|^2
\,\mathrm{d}x\,\mathrm{d}y+\varepsilon
\end{eqnarray*}
 % ------------- %
for all sufficiently large $n$, and using the fact that $Lh=0$ holds
by assumption, we get from here
 % ------------- %
$$
\int_{\mathbb{R}^2}|H_{\mathrm{Sm}} (A)\psi_n-\mu\psi_n|^2(x,y)\,
\mathrm{d}x\,\mathrm{d}y<\varepsilon\,,
$$
 % ------------- %
which is what we have set out to demonstrate.
\end{proof}

%%%%%%%%%%%%%%%%%%%%%%%%%%%%%%%%%%%%%%%%%%%%
\section{Spectrum of $H(A)$} \label{s:regul}
\setcounter{equation}{0}

Now we pass to the `regular' version of the magnetic
Smilansky-Solomyak model described by the Hamiltonian
\eqref{Hmagn2}. As before, the first question to address concerns
its self-adjointness. In this case we can check that $H(A)$ is
essentially self-adjoint on $C_0^\infty(\mathbb{R}^2)$ with a
reference to \cite{I90}: it is sufficient to find a sequence of
non-overlapping annular regions $A_m=\{z\in \mathbb{R}^2:\:
a_m<|z|<b_m\}$ and a sequence of positive numbers $\nu_m$ such that
 % ------------- %
\begin{equation}\label{ess}
(b_m-a_m)^2 \nu_m>K\,,\;\; V(z)\ge -k \nu^2_m
(b_m-a_m)^2\quad\text{for}\;\; z\in A_m \quad\text{and}\quad
\sum_{m=1}^\infty \nu_m^{-1}=\infty\,,
\end{equation}
 % ------------- %
where $K$ and $k$ are positive constants independent of $m$. It can
be seen easily that for $a_m=m,\,b_m=m+1$, and $\nu_m=m+1,\:m=0, 1,
2,\ldots$, the requirement (\ref{ess}) is satisfied if we choose
$K=\frac{1}{2}$ and $k=|\lambda| \| V\|_\infty$.

%%%%%%%%%%%%%%%%%%%%%%%%%%%%%%%%%%%%%%%%%%%%%%%%%%%%%%%%%%%
\subsection{Subcritical case: positivity and essential spectrum}
\label{ss:regul-posit}

As before we show first that the operator is positive in the
subcritical situation.
 % ------------- %
\begin{proposition}
$H (A)\ge0$ holds provided $\inf \sigma (L(V))\ge0$.
\end{proposition}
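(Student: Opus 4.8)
The plan is to follow the proof of Proposition~\ref{subcr-posit} almost line for line; the only structural change is that the transverse coupling is now quadratic rather than linear in $y$, which in fact simplifies the sign bookkeeping. Since $H(A)$ is essentially self-adjoint on $C_0^\infty(\R^2)$, it suffices to verify $Q(H(A))[u]\ge0$ for every $u\in C_0^\infty(\R^2)$, positivity then passing to the unique self-adjoint closure. Working in the Landau gauge $A=(-By,0)$ I would write
$$Q(H(A))[u]=\int_{\R^2}\left(\left|i\frac{\partial u}{\partial x}-By\,u\right|^2+\left|\frac{\partial u}{\partial y}\right|^2+\omega^2 y^2|u|^2+\lambda y^2 V(xy)|u|^2\right)\mathrm{d}x\,\mathrm{d}y,$$
and, exactly as in Proposition~\ref{subcr-posit}, discard the nonnegative transverse term $\int_{\R^2}|\partial_y u|^2$. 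By Fubini the remaining expression is the integral over $y$ of the one-dimensional $x$-forms, so it is enough to establish fiberwise nonnegativity.

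The core step is to show that for each fixed $y\neq0$ the $x$-form
$$u(\cdot,y)\mapsto\int_{\R}\left(\left|i\frac{\partial u}{\partial x}-By\,u\right|^2+\omega^2 y^2|u|^2+\lambda y^2 V(xy)|u|^2\right)\mathrm{d}x$$
is nonnegative. This form belongs to the operator $\left(i\frac{\mathrm{d}}{\mathrm{d}x}-By\right)^2+\omega^2 y^2+\lambda y^2 V(xy)$, which I would identify, up to unitary equivalence, with $y^2 L(V)$, with $L(V)$ as in \eqref{comparison2}. The equivalence is realized by the composition of a gauge phase $u=\mathrm{e}^{-iBxy}w$, which turns $\left(i\frac{\mathrm{d}}{\mathrm{d}x}-By\right)^2$ into $-\frac{\mathrm{d}^2}{\mathrm{d}x^2}$, and the $L^2$-unitary dilation $(Uw)(s)=|y|^{-1/2}w(s/y)$ associated with $s=xy$. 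Under the dilation $-\frac{\mathrm{d}^2}{\mathrm{d}x^2}\mapsto-y^2\frac{\mathrm{d}^2}{\mathrm{d}s^2}$, the constant $\omega^2 y^2$ is unchanged, and --- the decisive point --- the potential term becomes $\lambda y^2 V(s)$ precisely because the argument $xy$ is the new variable $s$. Factoring out $y^2$ leaves $y^2\left(-\frac{\mathrm{d}^2}{\mathrm{d}s^2}+\omega^2+\lambda V(s)\right)=y^2 L(V)\ge0$ by the hypothesis $\inf\sigma(L(V))\ge0$.

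The one place where this proof genuinely diverges from Proposition~\ref{subcr-posit}, and the step I would treat most carefully, is the sign of $y$. In the singular model the coupling $\lambda y\,\delta(x)$ is linear in $y$, which forces the passage to the auxiliary operator $\widetilde L$ of \eqref{widetildeL} with reversed sign of $\lambda$ on the half-plane $y<0$. Here the coupling carries the factor $y^2\ge0$, and the dilation sends $V(xy)$ to $V(s)$ irrespective of the sign of $y$ --- the orientation reversal in the substitution merely relabels the line of integration. Hence the same comparison operator $L(V)$ controls both $y>0$ and $y<0$ and no separate discussion is required. I do not anticipate a real obstacle beyond checking the dilation for $y<0$ and confirming that the manipulations are legitimate on $C_0^\infty(\R^2)$; once fiberwise nonnegativity is secured, integrating over $y$ yields $Q(H(A))[u]\ge0$ and hence $H(A)\ge0$.
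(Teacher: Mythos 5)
Your proof is correct and follows the paper's argument essentially verbatim: discard the $\left|\partial u/\partial y\right|^2$ term, fiber the remaining form over $y$, and identify each fiber operator with $y^2L(V)\ge0$ via gauge transformation and dilation. Your explicit treatment of the sign of $y$ --- observing that $V(xy)$ becomes $V(s)$ regardless of the sign of $y$, so that, unlike in Proposition~\ref{subcr-posit}, no auxiliary operator analogous to $\widetilde L$ is needed --- is a valid elaboration of the unitary equivalence the paper states without detail.
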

 % ------------- %
\begin{proof}
The argument is mimicking the reasoning used in
Proposition~\ref{subcr-posit}. For any $u\in \mathrm{Dom} (Q (H
(A)))\subset \mathcal{H}^1(\mathbb{R}^2)$ one has
 % ------------- %
\begin{eqnarray*}
\hspace{-5em} Q( H (A))(u)=\int_{\mathbb{R}^2}\left|i \frac{\partial
u}{\partial x}-B y u\right|^2+\left|\frac{\partial u}{\partial
y}\right|^2+\left(\omega^2 y^2+\lambda y^2 V(x y)\right)
|u|^2\,\mathrm{d}x\,\mathrm{d}y\\
\hspace{-4em} \ge\int_{\mathbb{R}^2}\left|i \frac{\partial
u}{\partial x}-B y u\right|^2+\left(\omega^2 y^2+\lambda y^2 V(x
y)\right) |u|^2\,\mathrm{d}x\,\mathrm{d}y\,.
\end{eqnarray*}
 % ------------- %
Furthermore, the quadratic forms
 % ------------- %
$$
u(\cdot,y) \mapsto \int_{\mathbb{R}}\left|i \frac{\partial
u}{\partial x}-B y u\right|^2+\left(\omega^2 y^2+\lambda y^2 V(x
y)\right) |u|^2\,\mathrm{d}x
$$
 % ------------- %
with a fixed $y$ correspond to the operators
 % ------------- %
$$
\left(i\frac{\mathrm{d}}{\mathrm{d} x}-B y\right)^2+\omega^2
y^2+\lambda y^2 V(x y)\quad\text{on}\quad
\mathcal{H}^1(\mathbb{R})\,,
$$
 % ------------- %
which are unitarily equivalent to $y^2 L(V)\ge0$.
\end{proof}
 % ------------- %

As in the case of $H_\mathrm{Sm}(A)$ the `unperturbed' essential
spectrum is preserved independently of the value the coupling
constant $\lambda$ may take.
 % ------------- %
\begin{theorem} \label{th:subcritess2}
$\sigma_{\mathrm{ess}} (H (A))\supset[\sqrt{\omega^2+B^2}, \infty)$.
\end{theorem}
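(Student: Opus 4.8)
The plan is to follow the proof of Theorem~\ref{th:subcritess} essentially verbatim, since the inclusion is produced by the magnetic oscillator channel alone and the scalar term $\lambda y^2 V(xy)$ will be shown to be a negligible perturbation on the relevant trial functions. Fix $\mu>\sqrt{\omega^2+B^2}$ and $\varepsilon>0$, and reuse the Weyl family $\varphi_{k,\alpha,m}$ from \eqref{Functions}, built from the principal eigenfunction $g$ of $h_{\mathrm{osc}}=-\frac{\mathrm{d}^2}{\mathrm{d}y^2}+(\omega^2+B^2)y^2$, a wave packet over the window $E$, and the cut-offs $\eta(x/k)$ and $\chi(y/k)$. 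As shown there, one can choose $\alpha=\alpha(k)$ and $m=m(\alpha,k)$ so that $\|\varphi_{k,\alpha,m}\|\ge\frac18$, and the identical computation—the magnetic oscillator part of $H(A)$ being the same operator as for $H_{\mathrm{Sm}}(A)$—yields the Weyl estimate \eqref{Weyl ineq.} for that part up to an $\mathcal{O}(k^{-2})$ remainder.

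The only new ingredient is to control the potential contribution $\lambda y^2 V(xy)\,\varphi_{k,\alpha,m}$ in $L^2$. Here I would invoke $\mathrm{supp}\,\varphi_{k,\alpha,m}\subset[k,mk]\times[-k,k]$: on this set $x\ge k$, and since $V$ is supported in $\{|xy|\le s_0\}$, at every point of $\mathrm{supp}\,\varphi_{k,\alpha,m}$ where $V(xy)\ne0$ we have $|y|\le s_0/x\le s_0/k$, whence $y^2\le s_0^2 k^{-2}$. Consequently
$$
\big\|\lambda y^2 V(xy)\,\varphi_{k,\alpha,m}\big\|_{L^2(\mathbb{R}^2)}
\le|\lambda|\,\|V\|_\infty\,\frac{s_0^2}{k^2}\,\|\varphi_{k,\alpha,m}\|_{L^2(\mathbb{R}^2)}\,,
$$
which is of the same $\mathcal{O}(k^{-2})$ order already tolerated in the estimate for $H_{\mathrm{Sm}}(A)$. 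A triangle inequality then bounds $\|(H(A)-\mu)\varphi_{k,\alpha,m}\|$ by the magnetic-oscillator estimate plus this term, reproducing an inequality of the type \eqref{final1}; choosing $k$ large makes the right-hand side arbitrarily small relative to $\|\varphi_{k,\alpha,m}\|^2$.

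To finish, exactly as in Theorem~\ref{th:subcritess} I would take a sequence $\varepsilon_j\searrow0$ and fix the parameters so that the supports of the associated functions $\varphi_{\varepsilon_j}$ are pairwise disjoint, e.g. $k(\varepsilon_j)>m(\alpha(k(\varepsilon_{j-1})),k(\varepsilon_{j-1}))\,k(\varepsilon_{j-1})$; this forces weak convergence to zero, so the family is a genuine singular Weyl sequence and places every $\mu>\sqrt{\omega^2+B^2}$ in $\sigma_{\mathrm{ess}}(H(A))$, the endpoint following by closedness of the essential spectrum. The step to watch is the compatibility of the two smallness regimes: the magnetic estimate is made small by tuning $\alpha$ and $m$ (securing $\|\varphi_{k,\alpha,m}\|\ge\frac18$) as well as $k$, while the potential bound must hold uniformly in that same regime. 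Since the potential estimate depends on $k$ only—through the support width $s_0/k$—and not on $\alpha$ or $m$, no conflict arises, so the argument is a bookkeeping matter rather than a conceptual obstacle.
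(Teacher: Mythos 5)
Your proposal is correct and follows essentially the same route as the paper: reuse the Weyl functions $\varphi_{k,\alpha,m}$ from \eqref{Functions} and show the extra term $\lambda y^2 V(xy)\varphi_{k,\alpha,m}$ is $\mathcal{O}(k^{-2})$ because $x\ge k$ on the support forces $|y|\le s_0/k$ wherever $V(xy)\ne 0$. The only (harmless) difference is that you bound the potential term pointwise by $|\lambda|\,\|V\|_\infty s_0^2 k^{-2}$ times $\|\varphi_{k,\alpha,m}\|$, whereas the paper runs the same support observation through the Plancherel computation used in Theorem~\ref{th:subcritess}.
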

 % ------------- %
\begin{proof}
As before we have to construct a Weyl sequence for any
$\mu\ge\sqrt{\omega^2+B^2}$, in other words, to find to any
$\varepsilon>0$ a function $\phi$  such that
 % ------------- %
\begin{equation}\label{Weyl ineq1.}
\|H (A) \phi-\mu \phi\|_{L^2 (\mathbb{R}^2)}<\varepsilon
\|\phi\|\,.
\end{equation}
 % ------------- %
We employ the functions defined by (\ref{Functions}) which obviously
belong to the domain of $H(A)$. The only change on the right-hand
side of (\ref{Weyl ineq.}) comes now from the addition of the term
$\lambda y^2 V(x y)\varphi_{k, \alpha, m}$. Using the fact that $V$
is by assumption compactly supported, we infer that
 % ------------- %
\begin{eqnarray*}
\hspace{-5em} \frac{1} {2\pi\, \mathrm{vol} (E)}
\int_{\mathbb{R}^2}y^2 \left|\int_E g\left(y-\frac{\xi
B}{\omega^2+B^2}\right) \,e^{i \xi (x-\alpha
k)}\,\mathrm{d}\xi\right|^2\,V^2(x y)\,
\eta^2\left(\frac{x}{k}\right)\,\chi^2\left(\frac{y}{k}\right)\,\mathrm{d}x\,\mathrm{d}y\\[.2em]
\hspace{-4em} \le\frac{\|\eta\|_\infty^2\,\|\chi\|^2_\infty
\|V\|_\infty^2}{\mathrm{vol} (E)}\int_{E\times\{|y|\le
\frac{s_0}{k}\}}y^2 g^2\left(y-\frac{x
B}{\omega^2+B^2}\right)\,\mathrm{d}x\,\mathrm{d}y \\[.2em]
\hspace{-4em} \le \frac{s_0^2 \|\eta\|_\infty^2\,\|\chi\|^2_\infty
\|V\|_\infty^2}{k^2 \mathrm{vol} (E)}\,\int_{E\times\mathbb{R}}
g^2\left(y-\frac{x
B}{\omega^2+B^2}\right)\,\mathrm{d}x\,\mathrm{d}y\\[.2em]
\hspace{-4em} =\frac{s_0^2 \|\eta\|_\infty^2\,\|\chi\|^2_\infty
\|V\|_\infty^2}{k^2}\,\int_{\mathbb{R}} g^2(z)\,\mathrm{d}z\,.
\end{eqnarray*}
 % ------------- %
Consequently, choosing $k$ large enough one can achieve that the
above integral will be sufficiently small, which together with the
inequality (\ref{final1}) implies the validity of \ref{Weyl ineq1.}.
The rest of the argument is the same as in
Theorem~\ref{th:subcritess}.
\end{proof}

%%%%%%%%%%%%%%%%%%%%%%%%%%%%%%%%%%%%%%%%%%%%%%%%%%%%%%%%%%%
\subsection{Subcritical case: essential spectrum threshold}
\label{ss:regul-discr}

While the value of $\lambda$ was irrelevant in the previous theorem, it becomes important if we ask about the essential spectrum threshold.
 % ------------- %
\begin{theorem} \label{reg-thresh}
Let $\inf\,\sigma(L(V))>0$, then the spectrum of $H (A)$ below $\sqrt{\omega^2+B^2}$
is purely discrete.
\end{theorem}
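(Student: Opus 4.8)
The plan is to reproduce the Neumann-bracketing argument used for $H_{\mathrm{Sm}}(A)$ in Section~\ref{ss:Smil-subcr-thresh}, with the comparison operator $L$ replaced by $L(V)$. Let $h_n^{(\pm)}(A)$ and $h_0(A)$ denote the Neumann restrictions of $H(A)$ to the half-planes $G^{(\pm)}_n=\R\times\{y:\pm y\ge n\}$ and the strip $G_0=\R\times[-n,n]$, where $n\in\mathbb{N}$ will be fixed later. The minimax principle again gives $H(A)\ge\big(h_n^{(+)}(A)\oplus h_n^{(-)}(A)\big)\oplus h_0(A)$, so it suffices to show that for $n$ large the spectra of $h_n^{(\pm)}(A)$ below $\sqrt{\omega^2+B^2}$ are empty and that for any prescribed $\Lambda<\sqrt{\omega^2+B^2}$ the spectrum of $h_0(A)$ below $\Lambda$ is purely discrete.

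For the half-plane parts I would, as in the positivity proof preceding Theorem~\ref{th:subcritess2}, neglect the $y$-kinetic energy and freeze $y$, observing that the transverse form $u(\cdot,y)\mapsto\int_\R|i\partial_x u-Byu|^2+(\omega^2y^2+\lambda y^2V(xy))|u|^2\,\D x$ corresponds to $(i\,\D/\D x-By)^2+\omega^2y^2+\lambda y^2V(xy)$, which after the gauge transformation $u\mapsto e^{iByx}u$ and the dilation $s=xy$ is unitarily equivalent to $y^2L(V)$ for every $y\ne0$. In contrast with the singular model the factor $y^2$ is here even in $y$, so the same $L(V)$ serves both signs and no analogue of $\widetilde L$ is needed. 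Since $y^2\ge n^2$ on $G^{(\pm)}_n$ this yields $\inf\sigma(h_n^{(\pm)}(A))\ge n^2\inf\sigma(L(V))$, and as $\inf\sigma(L(V))>0$ by hypothesis, this exceeds $\sqrt{\omega^2+B^2}$ once $n$ is chosen large enough.

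The substantial part concerns $h_0(A)$. Here I would compare it with the potential-free Neumann operator $\tilde h_0(A)=(i\partial_x-By)^2-\partial_y^2+\omega^2y^2$ on $\mathcal H^1(G_0)$ and establish the analogue of Lemma~\ref{pertlemma}, namely $\inf\sigma_{\mathrm{ess}}(h_0(A))=\inf\sigma_{\mathrm{ess}}(\tilde h_0(A))$. By the resolvent identity the difference $(h_0(A)-\mu)^{-1}-(\tilde h_0(A)-\mu)^{-1}$ equals $-(h_0(A)-\mu)^{-1}M(\tilde h_0(A)-\mu)^{-1}$ with $M$ the multiplication by $\lambda y^2V(xy)$, so it suffices to show that $M(\tilde h_0(A)-\mu)^{-1}$ is compact. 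The decisive observation is that, because $\mathrm{supp}\,V\subset[-s_0,s_0]$, nonvanishing of $V(xy)$ forces $|y|\le s_0/|x|$ on the support of $M$, whence $|\lambda y^2V(xy)|\le|\lambda|\,\|V\|_\infty\,s_0^2/x^2$; thus the amplitude of the perturbation decays like $\mathcal O(|x|^{-2})$ as $|x|\to\infty$, and it is suppressed near $y=0$ by the $y^2$ factor. Splitting $M=M_R+(M-M_R)$ according to $|x|\le R$, the truncation $M_R$ has support in the compact set $[-R,R]\times[-n,n]$, so $M_R(\tilde h_0(A)-\mu)^{-1}$ is compact by the local compactness of the embedding $\mathcal H^1\hookrightarrow L^2$, while $\|M-M_R\|\to0$ as $R\to\infty$; a norm limit of compact operators is compact, which proves the claim.

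With the essential spectra identified, $\tilde h_0(A)$ is treated exactly as in Section~\ref{ss:Smil-subcr-thresh}: the partial Fourier transform \eqref{Fourier} together with the Landau gauge $A=(-By,0)$ fibers it into the Neumann operators $h(\xi)=-\D^2/\D y^2+(\omega^2+B^2)(y-\xi B/(\omega^2+B^2))^2+\omega^2\xi^2/(\omega^2+B^2)$ on $L^2(-n,n)$, and after recentering these are Neumann harmonic oscillators whose ground-state energy satisfies $\inf\sigma(h(\xi))\ge\sqrt{\omega^2+B^2}+\mathcal O(n^{-1})$ uniformly in $\xi$. Consequently $\inf\sigma_{\mathrm{ess}}(h_0(A))\ge\sqrt{\omega^2+B^2}+\mathcal O(n^{-1})$, and choosing $n$ large enough that the right-hand side exceeds $\Lambda$ renders the spectrum of $h_0(A)$, and with it that of $H(A)$, below $\Lambda$ purely discrete. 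I expect the main obstacle to be the compactness step: unlike in the singular model, where the perturbation lived on the line $x=0$ and the Sobolev trace theorem applied, here one must exploit the genuine two-dimensional decay $\mathcal O(|x|^{-2})$ of $\lambda y^2V(xy)$ to turn the potential into a relatively compact perturbation, paying particular attention to the region near $y=0$ where the strip is unbounded in $x$ but the potential is suppressed.
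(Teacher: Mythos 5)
Your proposal is correct, but in its first half it takes a genuinely different route from the paper. For the region $|y|\ge n$ the paper does not bracket with two Neumann half-planes: it slices $\{|y|>1+\ln n_0\}$ into infinitely many thin strips $\R\times\{y:1+\ln n<\pm y\le 1+\ln(n+1)\}$, freezes the potential, $y^2V(xy)\approx(1+\ln n)^2V(\pm x(1+\ln n))$, and the magnetic shift, $By\approx B\ln n$, at the cost of $\mathcal{O}(\ln n/n)$ and $\varepsilon$-Cauchy--Schwarz errors, then gauges away the now constant magnetic term, separates variables, and arrives at $(1+\ln n)^2\inf\sigma(L_\varepsilon(V))\to\infty$. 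You instead drop the $y$-kinetic term and reduce fiberwise, for each fixed $y$, to $y^2L(V)$ via the $y$-dependent gauge $e^{iByx}$ and the dilation $t=xy$ --- exactly the device the paper itself uses for the half-plane parts of the singular model in Section~\ref{ss:Smil-subcr-thresh} and for the positivity statement in Section~\ref{ss:regul-posit}; your observation that the even factor $y^2$ in front of $V(xy)$ makes an analogue of $\widetilde{L}$ unnecessary is also correct. Your route is more elementary, avoiding the thin-strip bookkeeping and the $(1-\varepsilon)$ magnetic comparison entirely, at the price of producing only the crude bound $\inf\sigma(h_n^{(\pm)}(A))\ge n^2\inf\sigma(L(V))$ --- which is all the theorem needs; the paper's finer layered decomposition, inherited from \cite{BE17}, becomes advantageous only if one wants quantitative information about the discrete spectrum. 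On the strip $G_0$ the two arguments are essentially the same: both rest on the decay $|\lambda y^2V(xy)|\le|\lambda|\,\|V\|_\infty s_0^2/x^2$ forced by $\mathrm{supp}\,V\subset[-s_0,s_0]$. The paper introduces an intermediate operator $h_1(A,V)$ with the potential cut off at $|x|\le k$, uses $h_0(A,V)\ge h_1(A,V)-\mathcal{O}(k^{-2})$ and relative compactness of the cut-off potential, whereas you truncate inside the resolvent identity and invoke the norm-limit-of-compacts theorem; your packaging is slightly cleaner and yields full equality $\sigma_{\mathrm{ess}}(h_0(A,V))=\sigma_{\mathrm{ess}}(\tilde h_0(A))$ in one stroke rather than just the equality of thresholds. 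The concluding fibration of $\tilde h_0(A)$ into shifted Neumann oscillators with the uniform bound $\sqrt{\omega^2+B^2}+\mathcal{O}(n^{-1})$, and the final bracketing step, coincide with the paper's.
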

 % ------------- %
\begin{proof}
We employ Neumann bracketing combined with the minimax principle in a way similar to that used in \cite{BE17}. By $h^{(\pm)}_n (A, V)$ and $h_0 (A, V)$ we denote the Neumann restrictions of operator $H (A)$ to the strips
 % ------------- %
$$
G^{(\pm)}_n=\mathbb{R} \times \left\{y:\: 1+\ln n<\pm y\le
1+\ln(n+1)\right\},\,\,n\ge n_0\,,
$$
 % ------------- %
and $G_0=\mathbb{R}\times [-1-\ln n_0,1+\ln n_0]$, where $n_0\in\mathbb{N}$ will be chosen later. It allows us to estimated the operator from below,
 % ------------- %
\begin{equation}\label{N2}
H(A)\ge\left(\bigoplus_{n=n_0}^\infty\: h^{(+)}_n (A, V)\oplus
h^{(-)}_n (A, V)\right)\oplus h_0 (A, V)\,.
\end{equation}
 % ------------- %
To prove the result we have to show first that the spectral thresholds of $h_n^{(\pm)} (A, V)$ tend to infinity as $n\to\infty$, and secondly, that for any $\Lambda<\sqrt{\omega^2+B^2}$ one can choose $n_0$ in such a way that the spectrum of $h_0 (A, V)$ below
$\Lambda$ is purely discrete. The function $V$ is by assumption compactly supported with a bounded derivative, hence we have
 % ------------- %
$$
V(x y)-V(x (1+\ln n))=\mathcal{O}\left(\frac{1}{n\ln n}\right)\,,\quad y^2-(1+\ln n)^2=\mathcal{O}\left(\frac{\ln n}{n}\right)
$$
 % ------------- %
for any $(x,y) \in G_n^{(+)}$ and an analogous relation for $G^{(-)}_n$, and consequently,
 % ------------- %
\begin{equation}\label{ln}
y^2V(xy)-(1+\ln n)^2\,V(\pm x(1+\ln n))=\mathcal{O}\left(\frac{\ln n}{n}\right)
\end{equation}
 % ------------- %
holds for any $(x,y) \in G_n^{(\pm)}$. Moreover, for any function $u\in\mathcal{H}^1(G_n)$ and any fixed positive $\varepsilon$ we have
 % ------------- %
\begin{eqnarray*}
\hspace{-5em} \int_{G_n}\left|i \frac{\partial u}{\partial x}-B y u\right|^2\,\mathrm{d}x\,\mathrm{d}y
=\int_{G_n}\left|i \frac{\partial u}{\partial x}-B \ln n u+B(\ln n-y)u\right|^2\,\mathrm{d}x\,\mathrm{d}y\\
\hspace{-4em} \ge \int_{G_n}\left|i \frac{\partial u}{\partial x}-B \ln n u\right|^2\,\mathrm{d}x\,\mathrm{d}y
-2\int_{G_n}\sqrt{\varepsilon} \left|i \frac{\partial u}{\partial x}-B \ln n u\right| \frac{B}{\sqrt{\varepsilon}}|\ln n-y| |u|\,\mathrm{d}x\,\mathrm{d}y\\
\hspace{-4em} \ge(1-\varepsilon) \int_{G_n}\left|i \frac{\partial u}{\partial x}-B \ln n u\right|^2\,\mathrm{d}x\,\mathrm{d}y -\frac{B^2}{\varepsilon}\int_{G_n}(\ln (n+1)-\ln n)^2 |u|^2\,\mathrm{d}x\,\mathrm{d}y\,,
\end{eqnarray*}
 % ------------- %
and therefore
 % ------------- %
$$\left(i \frac{\partial}{\partial x}-B
y\right)^2\ge(1-\varepsilon) \left(i \frac{\partial}{\partial x}-B
\ln n\right)^2-\frac{B^2}{\varepsilon}\, (\ln (n+1)-\ln n)^2\,,
$$
 % ------------- %
which together with (\ref{ln}) implies the asymptotic inequalities
 % ------------- %
\begin{equation} \label{l_nk}
\inf\sigma(h_n^{(\pm)} (A, V))\ge(1-\varepsilon) \inf\sigma(l_n^{(\pm)} (A, V))+\mathcal{O}\left(\frac{\ln n}{n}\right)\,,
\end{equation}
 % ------------- %
in which the operators
 % ------------- %
$$
l_n^{(\pm)} (A, V):=\left(i\frac{\partial}{\partial x}-B \ln n\right)^2-\frac{\partial^2}{\partial y^2}+\omega^2(1+\ln
n)^2+\frac{\lambda}{1-\varepsilon} (1+\ln n)^2\,V(\pm x(1+\ln n))
$$
 % ------------- %
with Neumann conditions are defined on $G^{(\pm)}_n$.  Since $l_n^{\pm} (A, V)$  is easily seen to be unitarily equivalent to the non-magnetic  operator $\tilde{l}_n^{\pm}(V)=-\frac{\partial^2}{\partial x^2}-\frac{\partial^2}{\partial y^2}+\omega^2(1+\ln n)^2+\frac{\lambda}{1-\varepsilon} (1+\ln n)^2\,V(\pm x(1+\ln n))$ defined on the same domain, their spectra coincide. The operator $\tilde{l}_n^{\pm}(V)$ allows the separation of variables. Since the principal eigenvalue of $-\frac{\mathrm{d}^2}{\mathrm{d}y^2}$ on an
any interval with Neumann boundary conditions is zero, we have
 % ------------- %
\begin{equation}\label{lnk}
\inf \sigma (\tilde{l}_n^{\pm}(V))=\inf \sigma (l_n(V))\,,
\end{equation}
 % ------------- %
where $l_n(V)=-\frac{\mathrm{d}^2}{\mathrm{d}x^2}+\omega^2(1+\ln n)^2+\frac{\lambda}{1-\varepsilon} (1+\ln n)^2\,V(\pm x(1+\ln n))$. By the change of variable, $x=\frac{t}{1+\ln n}$, the last named operator is in turn unitarily equivalent to $(1+\ln n)^2 L_\varepsilon(V)$ with $L_\varepsilon(V) =\frac{\mathrm{d}^2}{\mathrm{d}t^2}+\omega^2+\frac{\lambda}{1-\varepsilon} V$, and therefore in view of inequality (\ref{lnk}) the relation $\inf \sigma (\tilde{l}_n^{\pm}(V))=(1+\ln n)^2 \inf \sigma (L_\varepsilon(V))$ holds, which together with the first order of perturbation-theory argument and (\ref{l_nk}) concludes the proof of the discreteness of $\bigoplus_{n=1}^\infty\:h^{(+)}_n (A, V)\oplus h^{(-)}_n (A, V)$.

It remains to inspect the spectrum of $h_0 (A, V)$. To proceed with the proof we need the following auxiliary result.
 % ------------- %
\begin{lemma}
Under our assumptions
 % ------------- %
\begin{equation}\label{stability2}
\inf \sigma_{\mathrm{ess}} (h_0 (A, V))=\inf \sigma_{\mathrm{ess}}(\tilde{h}_0 (A))\,,
\end{equation}
 % ------------- %
where $\tilde{h}_0 (A)$ is the operator $\left(i \frac{\partial}{\partial x}-By\right)^2-\frac{\partial^2}{\partial y^2}+\omega^2 y^2$ on
$L^2(G_0)$ with Neumann boundary conditions.
\end{lemma}
 % ------------- %
\begin{proof} From the minimax principle \cite[Secs.~XIII.1 and XIII.15]{RS78} it follows that
 % ------------- %
\begin{equation}\label{RS}
\inf\sigma_{\mathrm{ess}} (h_0 (A, V))\le\inf\sigma_{\mathrm{ess}}(\tilde{h}_0 (A))\,.
\end{equation}
 % ------------- %
To establish the opposite inequality it is enough to check that the spectrum of $h_0(A, V)$ is purely discrete below $\inf \sigma_{\mathrm{ess}} (\tilde{h}_0 (A))$. Given a $k\in\mathbb{N}$, we introduce the operator $h_1 (A,V)=\left(i \frac{\partial}{\partial x}-By\right)^2 -\frac{\partial^2}{\partial x^2}+\omega^2 y^2+\lambda y^2 V(x y)\chi_{\{|x|\le k\}} (x)$ for some large $k\in\mathbb{N}$. It differs from $h_0 (A, V)$ by the potential term in the region $\{|x|>k\}\times\mathbb{R}$, however, the potential $V$ is compactly supported by assumption, and therefore only $y\in\big(-\frac{s_0}{k}, \frac{s_0}{k}\big)$ must be considered and we get
 % ------------- %
$$
h_0(A, V)\ge h_1(A, V)-\frac{s_0^2 |\lambda| \|V\|_\infty}{k^2}\,,
$$
 % ------------- %
hence $\inf \sigma_{\mathrm{ess}} (h_0 (A, V))\ge\inf \sigma_{\mathrm{ess}} (h_1 (A, V))+\mathcal{O}\left(k^{-2}\right)$. Since $k$ can be chosen arbitrarily large, the identity \eqref{RS} would follow if we check that
 % ------------- %
\begin{equation}\label{stability3}
\sigma_{\mathrm{ess}} (h_1 (A,V))=\sigma_{\mathrm{ess}} (\tilde{h}_0 (A))\,.
\end{equation}
 % ------------- %
To this aim we use the stability of the essential spectrum against compact perturbations \cite[Sec.~XIII.4]{RS78}, specifically, we check the compactness of the resolvent difference $(h_1 (A, V)-z \mathbb{I})^{-1}-(\tilde{h}_0 (A)-z \mathbb{I})^{-1}$ as an operator on $L^2(G_0)$ for $z$ belonging to both the resolvent sets of $h_1 (A, V)$ and $\tilde{h}_0 (A)$. Using the resolvent identity we write the difference in question as
 % ------------- %
$$
(h_1 (A, V)-z \mathbb{I})^{-1} (h_1 (A, V)-\tilde{h}_0 (A)) (\tilde{h}_0 (A)-z \mathbb{I})^{-1}\,.
$$
 % ------------- %
It is easy to realize that for any bounded $\mathcal{U} \subset L^2(G_0)$ the set $(\tilde{h}_0 (A)-z \mathbb{I})^{-1} \mathcal{U}$ is uniformly $\mathcal{H}^1$ bounded. Furthermore, $h_1 (A, V)-\tilde{h}_0 (A)$ is by construction a compactly supported potential in $\{|x|\le k\}\times \{|y|\le n_0\}$, which implies its boundedness in $H^1(\Omega)$, where $\Omega=\{|x|\le k\}\times \{|y|\le n_0\})\cap\{(x, y): x y\in \mathrm{supp}\, V\}$. Finally, using the embedding theorems for Sobolev spaces on bounded domains we conclude that $(h_1 (A, V)-\tilde{h}_0 (A)) \mathcal{V}$ is compact in
$L^2(\Omega)$ which implies the same also for $(h_1 (A, V)-z \mathbb{I})^{-1} ((h_1 (A, V)-\tilde{h}_0 (A)) \mathcal{V})$, and thus the claim we have set out to prove.
\end{proof}

The rest is simple: combining the preceding lemma with the inclusion (\ref{sigmaess}) we verify the claim of Theorem~\ref{reg-thresh}.
\end{proof}

%%%%%%%%%%%%%%%%%%%%%%%%%%%%%%%%%%%%%%%%%%%%%%%%%%%%%%%%%%%%%%%%%
\subsection{Subcritical case: existence of the discrete spectrum}
\label{ss:regul-existence}

As in the previous section, proving that the spectrum below $\sqrt{\omega^2+B^2})$ says nothing about its existence, it has to be checked separately.
 % ------------- %
\begin{theorem}
Let $\inf \sigma (L(V))>0$, then the discrete spectrum of $H (A)$ is
non-empty and contained in the interval $(0, \sqrt{\omega^2+B^2})$.
\end{theorem}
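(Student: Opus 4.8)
The plan is to mirror the argument used for $H_{\mathrm{Sm}}(A)$ in Section~\ref{ss:Smil-subcr-exist}, reducing the magnetic problem to its nonmagnetic counterpart on real-valued trial functions. First I would record what is already established: by Theorem~\ref{th:subcritess2} and Theorem~\ref{reg-thresh} the essential spectrum in the subcritical regime equals $[\sqrt{\omega^2+B^2},\infty)$, while the Proposition of Section~\ref{ss:regul-posit} gives $H(A)\ge 0$. Consequently, by the minimax principle it suffices to produce a single normalized $u$ in the form domain of $H(A)$ with $Q(H(A))(u)<\sqrt{\omega^2+B^2}$ in order to guarantee a nonempty discrete spectrum in $[0,\sqrt{\omega^2+B^2})$.

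The decisive observation is that the magnetic and nonmagnetic forms coincide on real functions. Working in the Landau gauge $A=(-By,0)$, for real-valued $u$ the cross term in $|(i\nabla+A)u|^2=|iu_x-Byu|^2+|iu_y|^2$ drops out, leaving $|(i\nabla+A)u|^2=|\nabla u|^2+B^2y^2|u|^2$. Hence
\[
Q(H(A))(u)=Q(\widetilde{H})(u),\qquad \widetilde{H}:=-\Delta+(\omega^2+B^2)y^2+\lambda y^2V(xy),
\]
so that $\widetilde{H}$ is precisely the nonmagnetic regular Smilansky--Solomyak operator with $\omega$ replaced by $\sqrt{\omega^2+B^2}$.

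I would then import the existence result for the nonmagnetic model from \cite{BE14, BE17}. The comparison operator attached to $\widetilde{H}$ is $\widetilde{L}(V)=-\frac{\mathrm{d}^2}{\mathrm{d}x^2}+(\omega^2+B^2)+\lambda V=L(V)+B^2$, so the hypothesis $\inf\sigma(L(V))>0$ yields $\inf\sigma(\widetilde{L}(V))>B^2>0$; thus $\widetilde{H}$ lies in its subcritical regime \emph{a fortiori}. Since $\lambda<0$ the perturbation $\lambda y^2V(xy)$ is genuinely attractive, and the nonmagnetic analysis then provides a nonempty finite family of eigenvalues of $\widetilde{H}$ below $\sqrt{\omega^2+B^2}$ whose eigenfunctions may be chosen real-valued. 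Feeding such an eigenfunction $u$ back through the form identity gives $Q(H(A))(u)=Q(\widetilde{H})(u)<\sqrt{\omega^2+B^2}\,\|u\|^2$, and positivity of $H(A)$ places the resulting eigenvalue in $(0,\sqrt{\omega^2+B^2})$ --- strict positivity being ensured by $\inf\sigma(L(V))>0$, since a zero mode $\psi$ would force $\int y^2|\psi|^2=0$ and hence $\psi=0$.

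The genuinely nontrivial input, and therefore the expected main obstacle, is the nonmagnetic existence statement: that the attractive channel of $\widetilde{H}$ actually binds a state below the transverse threshold $\sqrt{\omega^2+B^2}$. This is not automatic, and it is precisely the content carried over from \cite{BE14, BE17}. The only point requiring care in the transfer is to confirm that the subcriticality condition there, phrased through the comparison operator, survives the frequency shift $\omega\mapsto\sqrt{\omega^2+B^2}$; but because $\widetilde{L}(V)=L(V)+B^2\ge L(V)>0$, the shift only strengthens the hypothesis, so the cited result applies without modification.
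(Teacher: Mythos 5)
Your proposal is correct, but it takes a genuinely different route from the paper's proof of this particular theorem. The paper argues directly and self-containedly: it inserts the explicit trial function $\phi(x,y)=k^{-1/2}h(y)\chi(x/k)$, with $h$ the ground state of $-\frac{\mathrm{d}^2}{\mathrm{d}y^2}+(\omega^2+B^2)y^2$ and $\chi$ a real cutoff, notes that the first-derivative (magnetic cross) terms vanish, and computes $Q(H(A))[\phi]=\sqrt{\omega^2+B^2}+\mathcal{O}(k^{-2})+\frac{\lambda}{k}\int y^2V(xy)h^2(y)\chi^2(x/k)\,\mathrm{d}x\,\mathrm{d}y$, then bounds the attractive term below in modulus by $\frac{\alpha^2|\lambda|}{k}\int_1^\infty yh^2(y)\,\mathrm{d}y\int_{-s_0}^{s_0}V(t)\,\mathrm{d}t$, so the $1/k$ gain beats the $\mathcal{O}(k^{-2})$ localization cost for large $k$. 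You instead transplant the paper's strategy for the \emph{singular} operator from Subsection~\ref{ss:Smil-subcr-exist}: on real-valued functions in the Landau gauge the magnetic form coincides with that of $\widetilde H=-\Delta+(\omega^2+B^2)y^2+\lambda y^2V(xy)$, whose comparison operator is $L(V)+B^2>0$, and you import the nonmagnetic existence result from \cite{BE14,BE17}. Your reductions are all sound --- the gauge/realness identity, the observation that the shift $\omega^2\mapsto\omega^2+B^2$ only strengthens subcriticality, the real choice of eigenfunctions (the nonmagnetic operator commutes with complex conjugation), and your explicit exclusion of a zero eigenvalue via $Q(H(A))[\psi]\ge\inf\sigma(L(V))\int y^2|\psi|^2$, a point the paper leaves implicit. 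The trade-off is clear: your route is shorter and makes the treatment of $H(A)$ uniform with that of $H_{\mathrm{Sm}}(A)$ (where the paper does cite \cite{S04} in exactly this way), but it delegates the crux --- that the attractive channel binds below the transverse threshold --- to the cited reference, so your proof is only as complete as that citation; you correctly flag this, and you should verify that \cite{BE17} (at the time only ``to appear'') states nonemptiness of the discrete spectrum for all $\lambda<0$, $V\not\equiv0$ in the subcritical regime. The paper's computation is precisely the inlined, frequency-shifted version of that input, and it is quantitative, exhibiting the $1/k$ versus $k^{-2}$ balance responsible for the bound state.
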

 % ------------- %
\begin{proof}
We have to construct a normalized trial function $\phi$ such that the corresponding value of the quadratic form $Q (H(A))$ will be less than
$\sqrt{\omega^2+B^2}$. This time we employ the letter $h$ to denote the normalized ground-state eigenfunction of the one-dimensional harmonic oscillator, $h_{\mathrm{osc}}=-\frac{\mathrm{d}^2}{\mathrm{d}y^2}+(\omega^2+B^2)y^2$ on $L^2(\mathbb{R})$, and set
 % ------------- %
$$
\phi(x, y):=\frac{1}{\sqrt{k}} h(y)\chi\left(\frac{x}{k}\right)\,,
$$
 % ------------- %
where $\chi(z)$ is a real-valued smooth function with $\mathrm{supp} (\chi)=[-1, 1]$ such that
 % ------------- %
$$
\int_{-1}^1\chi^2(z)\,\mathrm{d}z=1\,,\quad\min_{|z|\le1/2}\,
\chi(z)=:\alpha>0\,,
$$
 % ------------- %
and $k$ is a natural number to be chosen later. A straightforward computation yields
 % ------------- %
\begin{align*}
Q (H(A))[\phi]= & \int_{\mathbb{R}^2}\left|\frac{\partial{\phi}}{\partial{x}}\right|^2\,
\mathrm{d}x\,\mathrm{d}y
+\int_{\mathbb{R}^2}\left|\frac{\partial{\phi}}{\partial{y}}\right|^2\,
\mathrm{d}x\,\mathrm{d}y +\int_{\mathbb{R}^2}(\omega^2+B^2) y^2\,
|\phi|^2\,\mathrm{d}x\,\mathrm{d}y \\ & + \lambda
\int_{\mathbb{R}^2}y^2 V(x y)\,|\phi^2|\,\mathrm{d}x\,\mathrm{d}y\,,
\end{align*}
 % -------------- %
because the contribution from the terms containing the first derivatives is easily seen to vanish, and therefore
 % -------------- %
\begin{eqnarray}
\nonumber
\hspace{-5em} Q (H(A))[\phi]
=\frac{1}{k^3}\int_{\mathbb{R}^2}h^2(y)\,(\chi^\prime)^2
\left(\frac{x}{k}\right)\,\mathrm{d}x\,\mathrm{d}y
+\frac{1}{k}\int_{\mathbb{R}^2}\left(h^\prime\right)^2(y)\,
\chi^2\left(\frac{x}{k}\right)\,\mathrm{d}x\,\mathrm{d}y
\\ \nonumber \hspace{-2em} +\frac{1}{k}\int_{\mathbb{R}^2}(\omega^2+B^2) y^2
\,h^2(y)\,\chi^2\left(\frac{x}{k}\right)\,\mathrm{d}x\,\mathrm{d}y
+\frac{\lambda}{k}\int_{\mathbb{R}^2}y^2 V(x y)\, h^2(y)\,\chi^2
\left(\frac{x}{k}\right)\,\mathrm{d}x\,\mathrm{d}y
\\ \nonumber \hspace{-5em}
=\mathcal{O}(k^{-2})+\frac{1}{k}
\int_{\mathbb{R}^2}\left(\left(h^\prime\right)^2(y) +(\omega^2+B^2)
y^2\,h^2(y)\right)\,\chi^2\left(\frac{x}{k}\right)\,
\mathrm{d}x\,\mathrm{d}y
\\ \nonumber \hspace{-2em} +\frac{\lambda}{k}\int_{\mathbb{R}^2}y^2
V(x y)\, h^2(y)\, \chi^2\left(\frac{x}{k}\right)\,
\mathrm{d}x\,\mathrm{d}y
\\ \nonumber \hspace{-5em}
=\mathcal{O}(k^{-2})+\frac{\sqrt{\omega^2+B^2}}{k}\int_{\mathbb{R}^2}
h^2(y)\,\chi^2\left(\frac{x}{k}\right)\,\mathrm{d}x\,\mathrm{d}y
+\frac{\lambda}{k}\int_{\mathbb{R}^2}y^2 V(xy)\, h^2(y)\,
\chi^2\left(\frac{x}{k}\ \right)\,\mathrm{d}x\,\mathrm{d}y
\\ \label{non-emptiness} \hspace{-5em}
=\mathcal{O}(k^{-2})+\sqrt{\omega^2+B^2}+\frac{\lambda}{k}\int_{\mathbb{R}^2}y^2
V(x y)\, h^2(y)\,
\chi^2\left(\frac{x}{k}\right)\,\mathrm{d}x\,\mathrm{d}y\,.
\end{eqnarray}
 % -------------- %
We need to estimate the last term on the right-hand side of
(\ref{non-emptiness}). One has
 % -------------- %
\begin{eqnarray*}
\hspace{-5.5em} \lefteqn{\frac{|\lambda|}{k}\int_{\mathbb{R}^2}y^2 V(x y)\, h^2(y)\,
\chi^2\left(\frac{x}{k}\right)\,
\mathrm{d}x\,\mathrm{d}y=\frac{|\lambda|}{k}\int_{-k}^k
\int_{\mathbb{R}} y^2 V(x y)\, h^2(y)\,
\chi^2\left(\frac{x}{k}\right)\,\mathrm{d}x\,\mathrm{d}y} \\ &&
\hspace{-5em} \ge\frac{|\lambda|}{k}\int_{-k/2}^{k/2} \int_0^\infty y^2 V(x y)\,
h^2(y)\,
\chi^2\left(\frac{x}{k}\right)\,\mathrm{d}x\,\mathrm{d}y\ge\frac{\alpha^2
|\lambda|}{k}\int_{-k/2}^{k/2} \int_0^\infty y^2 V(x y)\,
h^2(y)\,\mathrm{d}x\,\mathrm{d}y\\ && \hspace{-5em} =\frac{\alpha^2
\lambda}{k}\int_0^\infty\int_{-k y/2}^{k y/2}y\, V(t)\,
h^2(y)\,\mathrm{d}t\,\mathrm{d}y\ge\frac{\alpha^2
|\lambda|}{k}\int_1^\infty\int_{-k/2}^{k/2}y V(t)\,
h^2(y)\,\mathrm{d}t\,\mathrm{d}y\\ && \hspace{-5em} \ge\frac{\alpha^2
|\lambda|}{k}\int_1^\infty y
h^2(y)\,\mathrm{d}y\,\int_{-k/2}^{k/2}V(t)\,\mathrm{d}t\,.
\end{eqnarray*}
 % -------------- %
If $k$ is chosen large enough the above estimate implies
 % -------------- %
$$
\frac{|\lambda|}{k}\int_{\mathbb{R}^2}y^2 V(x y)\, h^2(y)\,
\chi^2\left(\frac{x}{k}\right)\,\mathrm{d}x\,\mathrm{d}y\ge\frac{\alpha^2
|\lambda|}{k}\int_1^\infty y
h^2(y)\,\mathrm{d}y\,\int_{-s_0}^{s_0}V(t)\,\mathrm{d}t\,,
$$
 % -------------- %
hence in combination with (\ref{non-emptiness}) we infer that
 % -------------- %
$$
Q (H (A))[\phi] \le
\mathcal{O}(k^{-2})+\sqrt{\omega^2+B^2}-\frac{\alpha^2
|\lambda|}{k}\int_1^\infty y
h^2(y)\,\mathrm{d}y\,\int_{-a}^aV(t)\,\mathrm{d}t\,,
$$
 % -------------- %
where the right-hand side is obviously less than $\sqrt{\omega^2+B^2}$ for all $k$ large enough.
\end{proof}

%%%%%%%%%%%%%%%%%%%%%%%%%%%%%%%%%%%%%%%%%%%%%%%%%%%%
\subsection{The supercritical and critical cases}
\label{ss:regul-rest}

Let us turn next to the case where the `escape to infinity' is possible.
 % ------------- %
\begin{theorem}
Under our hypotheses, assuming in addition that the potential $V$ is symmetric with respect to the origin, $\sigma(H(A))=\mathbb{R}$ holds if
$\inf\sigma(L(V))<0$.
\end{theorem}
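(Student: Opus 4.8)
The proof runs in close parallel to that of Theorem~\ref{th:supercrit}, the comparison operator $L(V)$ from \eqref{comparison2} now playing the role of $L$, so I only indicate the changes. Since $V$ is bounded and compactly supported, $\sigma_{\mathrm{ess}}(L(V))=[\omega^2,\infty)$, hence in the supercritical regime the number $\inf\sigma(L(V))<0$ is a simple discrete eigenvalue lying below the essential spectrum, with a normalized eigenfunction $h$; because $V$ is symmetric with respect to the origin, $L(V)$ commutes with parity and $h$ may be taken \emph{even}. After a scaling as in Theorem~\ref{th:supercrit} we may assume $\inf\sigma(L(V))=-1$, i.e.\ $L(V)h=-h$, and we pass to the gauge $A=(0,Bx)$, so that $H(A)=-\partial_x^2-\partial_y^2+2iBx\,\partial_y+B^2x^2+\omega^2y^2+\lambda y^2V(xy)$. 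As in the singular case I would first establish $0\in\sigma(H(A))$ and then treat arbitrary real $\mu$.

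For the zero-energy case I would use the very Ansatz \eqref{sequence}, namely $\psi_k(x,y)=h(xy)\,\e^{iy^2/2}\chi_k(y/n_k)+y^{-2}f(xy)\,\e^{iy^2/2}\chi_k(y/n_k)$, with $\chi_k$ obeying \eqref{chi.conditions}, a correction $f\in\mathrm{Dom}(L(V))=\mathcal{H}^2(\R)$ chosen below, and $n_k$ large; these functions belong to $\mathrm{Dom}(H(A))$ exactly as in \eqref{sequence}, and the lower bound $\|\psi_k\|\ge\tfrac12$ follows through the substitution $t=xy$ and the first normalization in \eqref{chi.conditions}. The heart of the matter is the estimate $\|H(A)\psi_k\|^2<c\varepsilon$. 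Applying $-\partial_x^2$ to $h(xy)$ and invoking $L(V)h=-h$ converts the $x$-kinetic term together with the whole scalar potential $\omega^2y^2+\lambda y^2V(xy)$ into $-y^2\psi_k$ (up to the $f$-correction), while the phase $\e^{iy^2/2}$ is tailored so that $-\partial_y^2$ reproduces $+y^2\psi_k$, cancelling it. The terms carrying derivatives of $\chi_k$ are $\mathcal{O}(n_k^{-1})$, and the magnetic term $B^2x^2\psi_k$ is small because on the support of $\psi_k$ one has $x=t/y$ with $|y|$ of order $n_k$.

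The one genuinely new feature is the cross-term $2iBx\,\partial_y\psi_k$: when $\partial_y$ strikes the phase it yields the factor $iy$, producing an $\mathcal{O}(1)$ contribution $-2Bxy\,\psi_k=-2Bt\,\psi_k$ that cannot be made small. I would absorb it, in analogy with \eqref{diff.eq.}, by choosing $f$ to solve
\[
(L(V)+1)f = 2ith'(t)+ih(t)+2Bth(t)\,.
\]
This is a Fredholm problem for $L(V)+1$, whose kernel is spanned by $h$, so it is solvable precisely when the right-hand side is orthogonal to $h$. A short integration by parts gives $\int_{\R}(2ith'+ih)\,h\,\D t=0$ for normalized $h$, whereas $\int_{\R}2Bth^2\,\D t=0$ \emph{because $h$ is even}, which is exactly where the symmetry of $V$ enters. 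With this $f$ the leading residual vanishes and all remaining terms are bounded by $\varepsilon$ and $n_k^{-1}$, giving $\|H(A)\psi_k\|^2\le c\varepsilon$.

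For arbitrary real $\mu$ I would replace $\e^{iy^2/2}$ by $\e^{i\epsilon_\mu(y)}$ with $\epsilon_\mu(y)=\int_{\sqrt{|\mu|}}^{y}\sqrt{t^2+\mu}\,\D t$ and reduce to the case just treated through the same conjugation identities as in Theorem~\ref{th:supercrit}, estimating $\|H(A)\psi_k-\mu\psi_k\|$ by $\|\e^{-iy^2/2}H(A)(\psi_k\e^{-i\epsilon_\mu(y)+iy^2/2})\|+\varepsilon$ and applying the zero-energy bound to $\psi_k\e^{-i\epsilon_\mu(y)+iy^2/2}$. Finally, fixing $\varepsilon_j\searrow0$ and nesting the supports via $n_{k(\varepsilon_j)}>k(\varepsilon_{j-1})\,n_{k(\varepsilon_{j-1})}$ yields a singular Weyl sequence converging weakly to zero, so $\mu\in\sigma_{\mathrm{ess}}(H(A))$ and hence $\sigma(H(A))=\R$. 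The main obstacle is the bookkeeping in the key estimate: verifying that the correction equation is solvable --- i.e.\ that the symmetry hypothesis supplies precisely the missing orthogonality --- and that, apart from the one cancelled leading term, every contribution is uniformly $\mathcal{O}(\varepsilon)$ or $\mathcal{O}(n_k^{-1})$.
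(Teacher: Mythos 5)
Your proposal is correct and takes essentially the same route as the paper, whose proof is just the remark that the argument of Theorem~\ref{th:supercrit} carries over verbatim with $h$ replaced by the normalized ground-state eigenfunction of $L(V)$, the symmetry of $V$ being needed precisely to guarantee solvability of the analogue of \eqref{diff.eq.}. Your explicit verification of that solvability --- the Fredholm condition $\int_{\R}(2ith'+ih)h\,\D t=0$ by integration by parts together with $\int_{\R}2Bth^2\,\D t=0$ from the evenness of $h$ --- spells out exactly the role the paper assigns to the symmetry hypothesis.
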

 % ------------- %
\noindent The proof is completely the same as in Theorem~\ref{th:supercrit}, the only difference concerns the substitution of the function $h$ into the normalized eigenfunction of $L(V)$. The symmetry of the potential $V$ is needed to guarantee the existence of the solutions of the differential
equation (\ref{diff.eq.}).

\bigskip

Finally, in the critical case we have
 % ------------- %
\begin{theorem}\label{th:critess2}
$\sigma (H(A))=\sigma_{\mathrm{ess}} (H(A))=[0, \infty)$ holds provided $\inf\sigma(L(V))=0$.
\end{theorem}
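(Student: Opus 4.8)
The plan is to mirror the proof of Theorem~\ref{th:critess} almost verbatim, the only substantive change being that the role of the $\delta$-eigenfunction is now played by the threshold eigenfunction of $L(V)$. Since $\inf\sigma(L(V))=0$, the positivity proposition of Section~\ref{ss:regul-posit} gives $H(A)\ge 0$, hence $\sigma(H(A))\subset[0,\infty)$; it therefore remains only to prove $\sigma_{\mathrm{ess}}(H(A))\supset[0,\infty)$, and the chain $[0,\infty)\subset\sigma_{\mathrm{ess}}(H(A))\subset\sigma(H(A))\subset[0,\infty)$ will then yield the full claim, the spectrum being purely essential and equal to $[0,\infty)$.

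First I would observe that $0$ is a genuine discrete, simple eigenvalue of $L(V)$: because $V$ is compactly supported one has $\sigma_{\mathrm{ess}}(L(V))=[\omega^2,\infty)$, and since $0<\omega^2$ the spectral threshold sits strictly below it. Let $h\in\mathcal{H}^2(\mathbb{R})$ be the corresponding normalized ground state, so that $L(V)h=0$. Being a ground state below a positive essential threshold, $h$ decays exponentially together with its derivatives, which makes all the weighted integrals $\int_{\mathbb{R}}t^2|h'(t)|^2\,\D t$ and $\int_{\mathbb{R}}t^4|h''(t)|^2\,\D t$ entering the estimates below finite.

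For a fixed $\mu\ge 0$ I would take, exactly as in \eqref{sequence2}, the trial functions
\[
\psi_n(x,y):=h(xy)\,\e^{i\sqrt{\mu}\,y}\,\chi\!\left(\tfrac{y}{n}\right),
\]
with $\chi$ supported in $[1,2]$ and $\int_1^2\chi^2(z)\,\D z=1$; they lie in $\mathrm{Dom}(H(A))$, and the substitution $t=xy$ gives $\|\psi_n\|^2=\int_1^2 z^{-1}\chi^2(z)\,\D z\ge\tfrac12$ independently of $n$, so after normalization they are unit vectors. Working in the gauge $A=(0,Bx)$, the terms $-\partial_x^2$, $\omega^2 y^2$ and $\lambda y^2 V(xy)$ combine into
\[
-\partial_x^2\psi_n+\omega^2 y^2\psi_n+\lambda y^2 V(xy)\psi_n
=y^2\big(L(V)h\big)(xy)\,\e^{i\sqrt{\mu}\,y}\chi\!\left(\tfrac{y}{n}\right)=0,
\]
which is the decisive cancellation and the exact analogue of the identity $Lh=0$ used in Theorem~\ref{th:critess}. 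Next, $-\partial_y^2\psi_n$ produces a term $\mu\,h(xy)\,\e^{i\sqrt\mu y}\chi$ from the exponential, which cancels $-\mu\psi_n$, and leaves a remainder built from the terms $-x^2h''(xy)\chi$ and $-2i\sqrt{\mu}\,x\,h'(xy)\chi$ together with terms carrying a factor $n^{-1}$; the magnetic contributions $2iBx\,\partial_y\psi_n+B^2x^2\psi_n$ likewise all carry a factor $x$ or $x^2$. On the support $y\in[n,2n]$ one has $x=t/y=\mathcal{O}(t/n)$, so the substitution $t=xy$ turns each such remainder into an integral of order $\mathcal{O}(n^{-2})$ --- the very mechanism by which the transverse terms were controlled in Theorem~\ref{th:critess}. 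Hence $\|H(A)\psi_n-\mu\psi_n\|\to 0$.

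To finish I would pass to a subsequence $n_j$ with $n_{j+1}>2n_j$, so that the supports $\mathbb{R}\times[n_j,2n_j]$ are mutually disjoint and the normalized sequence converges weakly to zero; it is then a singular Weyl sequence and places $\mu$ in $\sigma_{\mathrm{ess}}(H(A))$. As $\mu\ge 0$ was arbitrary we obtain $\sigma_{\mathrm{ess}}(H(A))\supset[0,\infty)$. I expect no serious obstacle here: in contrast with the supercritical case (where $\inf\sigma(L(V))<0$), the critical relation $L(V)h=0$ annihilates the growing $y^2h$ term outright, so no corrector $f$ solving a differential equation of the type \eqref{diff.eq.} is needed, and in particular no symmetry hypothesis on $V$ is required. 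The only point needing a little care is confirming that the decay of $h$ is strong enough for the $\mathcal{O}(n^{-2})$ bounds, which is automatic for a ground state lying strictly below the essential spectrum.
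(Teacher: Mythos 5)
Your proposal is correct and coincides with the paper's own (very terse) proof, which simply says to repeat the argument of Theorem~\ref{th:critess} with $h$ replaced by the normalized zero-energy eigenfunction of $L(V)$: positivity from the proposition of Section~\ref{ss:regul-posit}, the trial functions $h(xy)\,\e^{i\sqrt{\mu}y}\chi(y/n)$, the cancellation $y^2(L(V)h)(xy)=0$, and $\mathcal{O}(n^{-1})$ control of the remaining terms, with no corrector $f$ and hence no symmetry assumption on $V$. You even fill in a point the paper leaves implicit, namely that $\inf\sigma(L(V))=0$ together with $\sigma_{\mathrm{ess}}(L(V))=[\omega^2,\infty)$ forces $0$ to be an actual eigenvalue with an exponentially decaying eigenfunction.
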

\noindent 
The proof repeats the almost exactly the argument leading to Theorem~\ref{th:critess}.

%%%%%%%%%%%%%%%%%%%%%%%%%%%%%%
\subsection*{Acknowledgements}
The authors are grateful to V.~Lotoreichik for a useful discussion.
The research has been supported by the Czech Science Foundation
(GA\v{C}R) within the project 17-01706S. D.B. also acknowledges a
support from the projects 01211/2016/RRC and the Czech-Austrian
Grant CZ 02/2017.

%%%%%%%%%%%%%%%%%%%%%%%%
\subsection*{References}

\end{document}